\newcommand{\veps}{\varepsilon}
\newcommand{\C}{\mathbb{C}}
\newcommand{\N}{\mathbb{N}}
\newtheorem{lettertheorem}{Theorem}
\newtheorem{letterlemma}[lettertheorem]{Lemma}
\newtheorem{defin}{Definition}[section]
\newtheorem{theorem}[defin]{Theorem}
\newtheorem{exa}{Example}
\newtheorem{lemma}[defin]{Lemma}
\newtheorem{rem}{Remark}
\newenvironment{remark}{\begin{rem}\rm}{\end{rem}}
\numberwithin{equation}{section}
\renewcommand{\ps@myheadings}{%
\renewcommand{\@evenhead}%
{{\rm\thepage}\hfil{\sc H.~Yu, J.~Heittokangas, J.~Wang, and Z.~T.~Wen}\hfil}%
\renewcommand{\@oddhead}%
{\hfil{{\sc The $\varphi$-order for the Askey-Wilson divided differences}\hfil{\rm\thepage}}}%
\renewcommand{\@evenfoot}{}%
\renewcommand{\@oddfoot}{}%
}\makeatother \pagestyle{myheadings}
\title{Meromorphic functions of finite $\varphi$-order and linear Askey-Wilson divided difference equations}
\author[Yu]{Hui Yu}
\author[Heittokangas]{Janne Heittokangas$^*$}
\address[Yu\\Heittokangas]{
Department of Physics and Mathematics, University of Eastern Finland, P.O.~Box 111, 80101 Joensuu, Finland}
\email{huiy@uef.fi\\janne.heittokangas@uef.fi}
\author[Wang]{Jun Wang}
\address[Wang]{School of Mathematical Sciences, Fudan University, Shanghai  200433, P.R.~China}
\email{majwang@fudan.edu.cn}
\author[Wen]{Zhi-Tao Wen}
\address[Wen]{Department of Mathematics, Shantou University, Shantou 515063, Guangdong, P.R.~China}
\email{zhtwen@stu.edu.cn}
\thanks{$^*$Corresponding author.}
\begin{document}
\maketitle

\begin{abstract}
The growth of meromorphic solutions of linear difference equations containing Askey-Wilson divided difference operators
is estimated. The $\varphi$-order is used as a general growth indicator, which covers the growth spectrum between
the logarithmic order $\rho_{\log}(f)$ and the classical order $\rho(f)$ of a meromorphic function $f$.

\medskip
\noindent
\textsc{Key words:} Askey-Wilson divided difference operator, Askey-Wilson divided difference equation, 
lemma on the logarithmic difference, meromorphic function, $\varphi$-order.

\medskip
\noindent
\textsc{MSC 2020:} Primary 39A13; Secondary 30D35.
\end{abstract}

\renewcommand{\thefootnote}{ }
\footnote{}


\section{Introduction}

Suppose that $q$ is a complex number satisfying $0<|q|<1$. In 1985, Askey and Wilson evaluated a $q$-beta integral 
\cite[Theorem~2.1]{AW}, which allowed them to construct a family of orthogonal polynomials \cite[Theorems~2.2--2.5]{AW}.
These polynomials are eigensolutions of a second order difference equation \cite[p.~36]{AW} that involves a divided difference operator $\mathcal{D}_q$ currently known as the \emph{Askey-Wilson operator}. We will define 
$\mathcal{D}_q$ below and call it the \emph{AW-operator} for brevity. In general, any three
consecutive orthogonal polynomials satisfy a certain three term recurrence relation, see \cite[p.~4]{AW} or
\cite[p.~42]{S}.

Recently,  Chiang and Feng \cite{CF} have obtained a full-fledged Nevanlinna theory for meromorphic functions of finite
logarithmic order with respect to the AW-operator on the complex plane $\C$. The concluding remarks in \cite{CF} admit
that the logarithmic order of growth appears to be restrictive, even though this class contains a large family of
important meromorphic functions. This encourages us to generalize some of the results in \cite{CF} in such a way that
the associated results for finite logarithmic order follow as special cases.

Let $\varphi:(R_0,\infty)\to (0,\infty)$ be a non-decreasing unbounded function.
The $\varphi$-order of a meromorphic function $f$ in $\mathbb{C}$ was introduced in \cite{HWWY} as the quantity
	$$
    \rho_{\varphi}(f)= \limsup_{r\to\infty}\frac{\log T(r,f)}{\log\varphi(r)}.
    $$
Prior to \cite{HWWY}, the $\varphi$-order was used as a growth indicator for meromorphic functions in the unit disc in \cite{CHR}.
In the plane case, the logarithmic order $\rho_{\log}(f)$ and the  classical order $\rho(f)$  of $f$ follow as special cases when choosing 
$\varphi(r)=\log r$ and $\varphi(r)=r$, respectively. This leads us to impose a global growth restriction 
	\begin{equation}\label{general-restriction}
	\log r\leq \varphi(r)\leq r,\quad r\geq R_0.
	\end{equation}
Here and from now on, the notation $r\geq R_0$ is being used to express that the associated inequality is valid ''for all
$r$ large enough''. 

For an entire function $f$, the Nevanlinna characteristic $T(r,f)$ can be replaced with the logarithmic maximum modulus $\log M(r,f)$ in
the quantities $\rho(f)$ and $\rho_{\log}(f)$ by using a well-known relation between $T(r,f)$ and $\log M(r,f)$, see \cite[p.~23]{Rubel}. 
The same is true for the $\varphi$-order, namely
	\begin{equation}\label{varphi-logM}
	\rho_\varphi(f)=\limsup_{r\to\infty}\frac{\log \log M(r,f)}{\log\varphi(r)},
	\end{equation}
provided that $\varphi$ is subadditive, that is, $\varphi(a+b)\leq \varphi(a)+\varphi(b)$ for all $a,b\geq R_0$.
In particular, this gives  $\varphi(2r)\leq 2\varphi(r)$, which yields \eqref{varphi-logM}.
Moreover, up to a normalization, subadditivity is implied by concavity, see \cite{HWWY} for details.

Following the notation in \cite{AW} (see \cite{CF} and \cite[p.~300]{Ismail} for an alternative notation), we suppose that $f(x)$ is a meromorphic function in $\C$, and let $x=\cos \theta$ and $z=e^{i\theta}$, where $\theta\in\C$.  Then, for $x\neq \pm 1$, the AW-operator is defined by
     \begin{equation}\label{(17)-CF}
     (\mathcal{D}_qf)(x):=\frac{\breve{f}(q^{\frac{1}{2}}e^{i\theta})-\breve{f}(q^{-\frac{1}{2}}e^{i\theta})}{\breve{e}(q^{\frac{1}{2}}e^{i\theta})-\breve{e}(q^{-\frac{1}{2}}e^{i\theta})}=\frac{\breve{f}(q^{\frac{1}{2}}e^{i\theta})-\breve{f}(q^{-\frac{1}{2}}e^{i\theta})}{(q^{\frac{1}{2}}-q^{-\frac{1}{2}})(z-1/z)/2},
     \end{equation}
where $ x=(z+1/z)/2=\cos\theta$, $z=e^{ i\theta}$, $e(x)=x$ and
    \begin{equation*}
    \breve{f}(z)=f((z + 1/z)/2)=f(x)=f(\cos \theta).
    \end{equation*}
In the exceptional cases $x=\pm 1$, we define 
	$$
	(\mathcal{D}_qf)(\pm 1)=\displaystyle\underset{x\neq \pm 1}{\lim_{x\to\pm 1}} (\mathcal{D}_qf)(x)
	=f'(\pm(q^{\frac{1}{2}}+q^{-\frac{1}{2}})/2).
	$$  
The branch of the square root in $z=x+\sqrt{x^2-1}$ can be fixed in such a way that for each $x\in\C$ there corresponds
a unique $z\in\C$, see  \cite{CF} and \cite[p.~300]{Ismail}. It is known that $\mathcal{D}_qf$ is meromorphic for a meromorphic function $f$ and entire for an entire function $f$ \cite[Theorem~2.1]{CF}. The AW-operator in \eqref{(17)-CF} can be written in the alternative form
     \begin{equation*}\label{df-another form}
     (\mathcal{D}_qf)(x)=\frac{f(\hat{x})-f(\check{x})}{\hat{x}-\check{x}},
     \end{equation*}
 where   $ x=(z+1/z)/2=\cos\theta$ and  
        \begin{equation*}\label{def-hat-x}
              \hat{x}=\frac{q^{\frac{1}{2}}z+q^{-\frac{1}{2}}z^{-1}}{2},\quad   \check{x}=\frac{q^{-\frac{1}{2}}z+q^{\frac{1}{2}}z^{-1}}{2}.
        \end{equation*}
Finally, AW-operators of arbitrary order are defined by $\mathcal{D}_q^{0}f=f$ and 
$\mathcal{D}_q^{n}f=\mathcal{D}_q(\mathcal{D}_q^{n-1}f)$, where $n\in\N$.

Lemma~\ref{Le-4.2-CF} below is a pointwise AW-type lemma on the logarithmic difference proved in \cite[Lemma~4.2]{CF},
and it is used in \cite{CF} to study the growth of meromorphic solutions of Askey-Wilson divided difference equations. We note that finite 
logarithmic order implies finite $\varphi$-order because of the growth restriction \eqref{general-restriction}.

\begin{letterlemma}\label{Le-4.2-CF}
Let $f(x)$ be a meromorphic function of finite  logarithmic order  such that $\mathcal{D}_q f\not\equiv 0$, and let $\alpha_1\in (0,1)$ be  arbitrary. Then there exists a constant $C_{\alpha_1}>0$ such that for $2(|q^{1/2}|+|q^{-1/2}|)|x|<R$,  we have      
\begin{equation}\label{(35)CF}
     \begin{split}
\log^+\left|\frac{\mathcal{D}_q f(x)}{f(x)}\right|
    &\leq 
    \frac{4R(|q^{1/2}-1|+|q^{-1/2}-1|)|x|}{(R-|x|)(R-2(|q^{1/2}|+|q^{-1/2}|)|x|)}\left(m(R,f)+m(R,1/f)\right)\\
    &\quad +2(|q^{1/2}-1|+|q^{-1/2}-1|)|x|\left(\frac{1}{R-|x|}+\frac{1}{R-2(|q^{1/2}|+|q^{-1/2}|)|x|}\right)\\
    &\quad \quad\times\left(n(R,f)+n(R,1/f)\right)\\
    &\quad+2C_{\alpha_1} (|q^{1/2}-1|^{\alpha_1}+|q^{-1/2}-1|^{\alpha_1}) |x|^{\alpha_1}\underset{|c_n|<R}{\sum}\frac{1}{|x-c_n|^{\alpha_1}}\\
    &\quad+2C_{\alpha_1} |q^{-1/2}-1|^{\alpha_1} |x|^{\alpha_1}\underset{|c_n|<R}{\sum}\frac{1}{|x+c(q)q^{-1/2}z^{-1}-q^{-1/2}c_n|^{\alpha_1}}\\
    &\quad+2C_{\alpha_1} |q^{1/2}-1|^{\alpha_1} |x|^{\alpha_1}\underset{|c_n|<R}{\sum}\frac{1}{|x-c(q)q^{1/2}z^{-1}-q^{1/2}c_n|^{\alpha_1}}+\log 2,
     \end{split}
     \end{equation}
where $c(q)=(q^{-1/2}-q^{1/2})/2  $ and $\{c_n\}$ is the combined sequence of zeros and poles of $f$.
\end{letterlemma}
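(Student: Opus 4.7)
The plan is to follow the classical Poisson--Jensen route behind Nevanlinna's lemma on the logarithmic derivative, adapted to the three-point geometry inherent in $\mathcal{D}_q$. A first observation is that the hypothesis $2(|q^{1/2}|+|q^{-1/2}|)|x|<R$, together with the bound $|\hat{x}|,|\check{x}|\le (|q^{1/2}|+|q^{-1/2}|)(|z|+|z|^{-1})/2$ (obtained directly from the defining formulas, with the branch $|z|\ge 1$ giving $|z|\lesssim 2|x|$), places both $\hat{x}$ and $\check{x}$ in the open disk $\{|w|<R\}$. One can therefore apply the Poisson--Jensen formula to $\log f$ at each of $x,\hat{x},\check{x}$; on a simply connected subdomain avoiding the zero/pole set we have the representation $\log f(w) = P(w) + Z(w) - W(w)$ modulo $2\pi i$, where $P$ is the Poisson integral of $\log|f|$ over $|\zeta|=R$ and $Z, W$ are the Blaschke-type sums over the zeros $\{a_n\}$ and poles $\{b_n\}$ of $f$ inside $\{|\zeta|<R\}$.

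Next I would estimate
\[
\frac{\mathcal{D}_q f(x)}{f(x)} = \frac{1}{\hat{x}-\check{x}}\left(\frac{f(\hat{x})}{f(x)}-\frac{f(\check{x})}{f(x)}\right)
\]
by comparing the two ratios $f(\hat{x})/f(x)$ and $f(\check{x})/f(x)$ term-by-term in the Poisson--Jensen factorization. The key algebraic identity
\[
\frac{Re^{i\phi}+a}{Re^{i\phi}-a}-\frac{Re^{i\phi}+b}{Re^{i\phi}-b} = \frac{2Re^{i\phi}(a-b)}{(Re^{i\phi}-a)(Re^{i\phi}-b)},
\]
applied with $(a,b)=(\hat{x},x)$ and $(a,b)=(\check{x},x)$, is what makes the argument work: it extracts the factor $\hat{x}-x$ (respectively $\check{x}-x$) from the Poisson kernel difference, which is bounded by $(|q^{1/2}-1|+|q^{-1/2}-1|)|x|$ via the defining formulas for $\hat{x},\check{x}$. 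The bound $|Re^{i\phi}-\hat{x}|,|Re^{i\phi}-\check{x}|\ge R-2(|q^{1/2}|+|q^{-1/2}|)|x|$ then delivers the denominator of the first term of \eqref{(35)CF}, while $m(R,f)+m(R,1/f)$ arises as usual from $|\log|f||=\log^+|f|+\log^+|1/f|$. An analogous identity for each log-Blaschke ratio, bounded crudely away from the zeros and poles and summed, yields the second term involving the counting functions $n(R,f)+n(R,1/f)$.

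Finally, the local parts close to the zeros/poles are handled by the standard $\alpha_1$-inequality $\log(1/|w|)\le C_{\alpha_1}|w|^{-\alpha_1}$ (for $w\ne 0$ and $\alpha_1\in(0,1)$), applied to each of the three families $\log|x-c_n|$, $\log|\hat{x}-c_n|$, $\log|\check{x}-c_n|$ separately. This produces three sums, corresponding to the three base points; the somewhat disguised denominators in the last two sums are the rewrites
\[
|x+c(q)q^{-1/2}z^{-1}-q^{-1/2}c_n|=|q^{-1/2}||\hat{x}-c_n|,\quad |x-c(q)q^{1/2}z^{-1}-q^{1/2}c_n|=|q^{1/2}||\check{x}-c_n|,
\]
which follow directly from the formulas defining $\hat{x},\check{x}$. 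The main obstacle I anticipate is the algebraic bookkeeping needed to ensure that every coefficient in the final bound carries the correct combination $|q^{\pm 1/2}-1|$ (or $|q^{\pm 1/2}-1|^{\alpha_1}$) rather than the cruder $|q^{1/2}-q^{-1/2}|$; once all estimates are assembled, the concluding $\log 2$ emerges from the elementary inequality $\log^+|a-b|\le\log^+|a|+\log^+|b|+\log 2$.
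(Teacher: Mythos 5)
This paper never proves Lemma~\ref{Le-4.2-CF}: it is imported verbatim from Chiang--Feng \cite[Lemma~4.2]{CF}, so the only proof to compare against is the one in that source, and your sketch reproduces its route faithfully. You apply the Poisson--Jensen formula at the three points $x,\hat{x},\check{x}$ (all inside $\{|w|<R\}$ under the stated hypothesis), use the kernel-difference identity to extract the displacements $\hat{x}-x$ and $\check{x}-x$ for the $m(R,f)+m(R,1/f)$ term, let the ``far'' parts $\log|R^2-\bar{c}_nw_1|-\log|R^2-\bar{c}_nw_2|$ of the Blaschke factors produce the $n(R,f)+n(R,1/f)$ term, and your rewrites are correct: indeed $x+c(q)q^{-1/2}z^{-1}=q^{-1/2}\hat{x}$ and $x-c(q)q^{1/2}z^{-1}=q^{1/2}\check{x}$, which explains the disguised denominators in the last two sums.

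The one step that would fail as literally written is your treatment of the local parts. What must be estimated there is not $-\log|x-c_n|$, $-\log|\hat{x}-c_n|$, $-\log|\check{x}-c_n|$ separately, but the ratios $\log\frac{|\hat{x}-c_n|}{|x-c_n|}$ (coming from zeros) and $\log\frac{|x-c_n|}{|\hat{x}-c_n|}$ (coming from poles), and likewise with $\check{x}$. Applying $\log(1/|w|)\le C_{\alpha_1}|w|^{-\alpha_1}$ to each family separately leaves the positive terms $\log|\hat{x}-c_n|$ unabsorbed (bounding them crudely by $\log(2R)$ per term would add $n(R)\log R$, a term absent from \eqref{(35)CF} and far larger than its actual $n(R)$-coefficient $\sim|x|/R$), and it yields sums with a constant coefficient instead of the crucial prefactor $|q^{\pm1/2}-1|^{\alpha_1}|x|^{\alpha_1}$. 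The inequality you need is the ratio form
\[
\log\frac{|w_1-c_n|}{|w_2-c_n|}\le\log\Bigl(1+\frac{|w_1-w_2|}{|w_2-c_n|}\Bigr)\le\frac{1}{\alpha_1}\,\frac{|w_1-w_2|^{\alpha_1}}{|w_2-c_n|^{\alpha_1}},
\]
which places the displacement in the numerator; then $|\hat{x}-x|,|\check{x}-x|\lesssim(|q^{1/2}-1|+|q^{-1/2}-1|)|x|$ gives the first sum, while $|q^{-1/2}|\,|\hat{x}-x|\lesssim|q^{-1/2}-1|\,|x|$ and $|q^{1/2}|\,|\check{x}-x|\lesssim|q^{1/2}-1|\,|x|$ (after multiplying numerator and denominator by $|q^{\mp1/2}|^{\alpha_1}$) give exactly the coefficients of the last two sums. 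This is precisely the ``bookkeeping'' you flagged as the main obstacle, so the repair is local and standard, but with the inequality in the form you stated the proof does not close.
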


The choice $R=r\log r$ in Lemma~\ref{Le-4.2-CF} is made in proving \cite[Theorem~3.1]{CF}, which is an AW-type lemma 
on the logarithmic difference asserting
    \begin{equation}\label{est-m}	
	m\left(r,\frac{\mathcal{D}_q f(x)}{f(x)}\right)
	=O\left((\log r)^{\rho_{\log}(f)-1+\varepsilon}\right),
    \end{equation}
where $\varepsilon>0$ is arbitrary and $f$ is a meromorphic function of finite logarithmic order $\rho_{\log}(f)$ 
such that $\mathcal{D}_qf\not\equiv 0$. The estimate \eqref{est-m} in turn is used to prove a growth estimate \cite[Theorem~12.4]{CF}
for meromorphic solutions of AW-divided difference equations, stated as follows.

\begin{lettertheorem}\label{CF12.4-or}
Let $a_0(x),a_1(x),\ldots,a_{n-1}(x)$ be entire functions such that 
    $$
    \rho_{\log}(a_0)>\max_{1\leq  j\leq n}\{\rho_{\log}(a_j)\}.
    $$
Suppose that  $f$ is an entire solution of the AW-divided difference equation
    $$
     \sum_{j=0}^na_j(x)\mathcal{D}_q^{j}f(x)=0,
    $$
where $a_n(x)=1$. Then $\rho_{\log}(f)\geq\rho_{\log}(a_0)+1$.
\end{lettertheorem}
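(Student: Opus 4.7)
The plan is to apply the standard Nevanlinna template for producing lower growth bounds for solutions of linear difference equations. Since $a_n \equiv 1$, I solve the equation for the dominant coefficient:
\begin{equation*}
a_0(x) \;=\; -\sum_{j=1}^{n} a_j(x)\, \frac{\mathcal{D}_q^{j} f(x)}{f(x)}.
\end{equation*}
Assuming without loss of generality that $\rho_{\log}(f) < \infty$ (otherwise there is nothing to prove), I take the proximity function of both sides and use the standard inequalities $m(r, g_1 + g_2) \leq m(r, g_1) + m(r, g_2) + \log 2$ and $m(r, g_1 g_2) \leq m(r, g_1) + m(r, g_2)$. Since each $a_j$ is entire, $T(r, a_j) = m(r, a_j)$, and one obtains
\begin{equation*}
T(r, a_0) \;\leq\; \sum_{j=1}^{n} T(r, a_j) + \sum_{j=1}^{n} m\!\left(r, \frac{\mathcal{D}_q^{j} f}{f}\right) + O(\log r).
\end{equation*}

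The crucial step is to bound each $m\bigl(r, \mathcal{D}_q^{j} f / f\bigr)$ uniformly in $j$. For $j = 1$ this is exactly \eqref{est-m}. For $j \geq 2$, I telescope via
\begin{equation*}
\frac{\mathcal{D}_q^{j} f}{f} \;=\; \prod_{k=1}^{j} \frac{\mathcal{D}_q^{k} f}{\mathcal{D}_q^{k-1} f}
\end{equation*}
and apply \eqref{est-m} to each factor in turn. This requires that $\rho_{\log}(\mathcal{D}_q^{k-1} f) \leq \rho_{\log}(f)$ for every relevant $k$, a fact that should be extractable by iterating the pointwise bound of Lemma~\ref{Le-4.2-CF} together with standard Nevanlinna arithmetic ($\mathcal{D}_q^{k-1} f$ is a bounded-scale combination of values of $f$, whose characteristic is controlled by that of $f$). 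Granting this, the iteration yields
\begin{equation*}
m\!\left(r, \frac{\mathcal{D}_q^{j} f}{f}\right) \;=\; O\!\left((\log r)^{\rho_{\log}(f) - 1 + \varepsilon}\right), \qquad 1 \leq j \leq n,
\end{equation*}
for any $\varepsilon > 0$.

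Now I compare growth rates. Set $\beta := \max_{1 \leq j \leq n} \rho_{\log}(a_j)$, so that by hypothesis $\beta < \rho_{\log}(a_0)$. For each $j \geq 1$ we have $T(r, a_j) = O\bigl((\log r)^{\beta + \varepsilon}\bigr)$, while by definition of $\rho_{\log}(a_0)$ there is a sequence $r_k \to \infty$ with $T(r_k, a_0) \geq (\log r_k)^{\rho_{\log}(a_0) - \varepsilon}$. Combining the above along $\{r_k\}$ gives
\begin{equation*}
(\log r_k)^{\rho_{\log}(a_0) - \varepsilon} \;\leq\; O\!\left((\log r_k)^{\beta + \varepsilon}\right) + O\!\left((\log r_k)^{\rho_{\log}(f) - 1 + \varepsilon}\right).
\end{equation*}
Since $\rho_{\log}(a_0) - \varepsilon > \beta + \varepsilon$ for $\varepsilon$ sufficiently small, the first term on the right is absorbed into half of the left side, forcing $\rho_{\log}(a_0) - \varepsilon \leq \rho_{\log}(f) - 1 + \varepsilon$. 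Letting $\varepsilon \to 0$ yields $\rho_{\log}(f) \geq \rho_{\log}(a_0) + 1$.

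The main obstacle is the technical iteration needed to lift \eqref{est-m} from $\mathcal{D}_q f / f$ to the higher ratios $\mathcal{D}_q^{j} f / \mathcal{D}_q^{j-1} f$, which hinges on verifying that finite logarithmic order is preserved under the AW-operator. Everything else is routine Nevanlinna bookkeeping and a growth comparison.
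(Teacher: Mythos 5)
Your proof is correct and takes essentially the same route as the paper, which establishes the general $\varphi$-order version (Theorem~\ref{AW-th12.4}) by exactly your scheme: divide the equation by $f$, telescope $\mathcal{D}_q^{j}f/f$ into the ratios $\mathcal{D}_q^{k}f/\mathcal{D}_q^{k-1}f$, apply the Askey-Wilson logarithmic-difference lemma (Lemma~\ref{m-AW}, whose logarithmic-order case is \eqref{est-m}) to each factor, and compare growth along a sequence realizing the order of $a_0$. The one step you defer, preservation of finite order under $\mathcal{D}_q$, is precisely what the paper's Lemma~\ref{T-D_f} and Remark~\ref{2.8-re} supply; in your entire-function setting it follows by induction from $m(r,\mathcal{D}_q^{k}f)\leq m(r,\mathcal{D}_q^{k-1}f)+m\left(r,\mathcal{D}_q^{k}f/\mathcal{D}_q^{k-1}f\right)$ together with \eqref{est-m}, since $\mathcal{D}_q$ maps entire functions to entire functions.
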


Our main objectives are to find  $\varphi$-order analogues of the estimate \eqref{est-m} and of Theorem~\ref{CF12.4-or}.
A non-decreasing function $s:(R_0,\infty)\to(0,\infty)$ satisfying a global growth restriction
	\begin{equation}\label{assumption}
	r< s(r)\leq r^2,\quad r\geq R_0,
	\end{equation}
will take the role of $R$ in Lemma ~\ref{Le-4.2-CF}.
Suitable test functions for $\varphi$ and $s$ then are, for example,
	\begin{equation*}\label{test-functions}
	\varphi(r)=\log^\alpha r,\quad \varphi(r)=\exp(\log^\beta r),\quad \varphi(r)=r^\beta,
	\end{equation*}
along with $s(r)=r\log r$ and $s(r)=r^\alpha$, where $\alpha\in(1,2]$ and $\beta\in (0,1]$.

This paper is organized as follows. A generalization of Theorem~\ref{CF12.4-or} for meromorphic solutions in terms of the $\varphi$-order 
is given in Section~\ref{main result}. Two AW-type lemmas on the logarithmic difference in terms of the $\varphi$-order are given in 
Section~\ref{lemmas}. One of them will be among the most important individual tools later on. Section~\ref{N--} consists of lemmas on 
AW-type counting functions as well as on the Nevanlinna characteristic of $\mathcal{D}_q f$.  These lemmas
are crucial in proving the main results, which are Theorem~\ref{AW-th12.4} and \ref{AW-th12.4-non} below. The details of the proofs are given in Section~\ref{proofs}.

 
\section{Results on Askey-Wilson divided difference equations}\label{AW-section}\label{main result}

We consider the growth of meromorphic solutions of AW-divided difference equations
 \begin{equation}\label{AW-q-diff}
    \sum_{j=0}^na_j(x)\mathcal{D}_q^{j}f(x)=0
    \end{equation}
    and of the corresponding non-homogeneous AW-divided difference equations
    \begin{equation}\label{AW-q-diff-non}
    \sum_{j=0}^na_j(x)\mathcal{D}_q^{j}f(x)=a_{n+1}(x),
    \end{equation}
where $a_0,\ldots,a_{n+1}$ are meromorphic functions, and $a_0a_n\not\equiv 0$. 
The results that follow depend on growth parameters introduced in \cite{HWWY} and defined by
		\begin{equation}\label{liminf}
	\alpha_{\varphi,s}=\liminf_{r\to\infty}\frac{\log \varphi(r)}{\log \varphi(s(r))} \quad\text{and}\quad
	\gamma_{\varphi,s}=\liminf_{r\to\infty}\frac{\log\log \frac{s(r)}{r}}{\log \varphi(r)}.
	\end{equation}
Due to the assumptions \eqref{general-restriction} and \eqref{assumption},
we always have $ \alpha_{\varphi,s}\in[0,1]$ and $ \gamma_{\varphi,s}\in[-\infty,1]$.
From now on, we make a global assumption
    \begin{equation*}\label{global-s/r}
    \liminf_{r\to\infty}\frac{s(r)}{r}>1,
    \end{equation*}
which ensures that $ \gamma_{\varphi,s}\in[0,1]$. Further properties and relations related to the growth parameters $\alpha_{\varphi,s}$ and $\gamma_{\varphi,s}$ can be found in \cite{HWWY}.

Theorem~\ref{AW-th12.4} below reduces to Theorem~\ref{CF12.4-or} when choosing $\varphi(r)=\log r$ and $s(r)=r^2$ and when
the coefficients and solutions are entire functions.  
 
\begin{theorem}\label{AW-th12.4}
Suppose that $\varphi(r)$ is subadditive, and let $\alpha_{\varphi,s}$ and $\gamma_{\varphi,s}$ be the constants in 
\eqref{liminf}. Let $a_0,\ldots,a_{n}$ be meromorphic functions of finite $\varphi$-order such that 
    $$
    \rho_{\varphi}(a_0)>\max_{1\leq  j\leq n}\{\rho_\varphi(a_j)\}.
    $$
	\begin{itemize}
    \item[\textnormal{(a)}] 
    Suppose that $\displaystyle\limsup_{r\to\infty}\frac{s(r)}{r}=\infty$ and  that $s(r)$ is convex and differentiable.
	If $f$ is a non-constant meromorphic solution of \eqref{AW-q-diff}, then
    \begin{equation}\label{a-1-2.1}
    \rho_\varphi (f)  \geq   \alpha_{\varphi,s}^n \rho_\varphi(a_0).       
   	\end{equation}
   	Moreover, if the coefficients $a_0,\ldots, a_n$ are entire, then
    \begin{equation}\label{a-2-2.1}
    \rho_\varphi (f) \geq \alpha_{\varphi,s}^n \rho_\varphi(a_0)+\alpha_{\varphi,s}^n \gamma_{\varphi,s}.
    \end{equation}
	\item[\textnormal{(b)}]  
	Suppose that $\displaystyle\limsup_{r\to\infty}\frac{s(r)}{r}<\infty$. If $f$ is a non-constant meromorphic solution 
	of \eqref{AW-q-diff}, then $\rho_\varphi(f)\geq\alpha_{\varphi,s}^{n-1}\rho_\varphi(a_0)$.
\end{itemize}  
\end{theorem}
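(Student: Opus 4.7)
The plan is to isolate $a_0$ and bound its Nevanlinna characteristic via the iterated AW-type log-derivative lemmas announced in Sections~\ref{lemmas} and~\ref{N--}. Dividing \eqref{AW-q-diff} by $f$ gives
$$-a_0(x) = \sum_{j=1}^{n} a_j(x)\,\frac{\mathcal{D}_q^{j} f(x)}{f(x)},$$
and applying $T(r,\cdot)$ to both sides yields
$$T(r,a_0) \leq \sum_{j=1}^{n} T(r,a_j) + \sum_{j=1}^{n} m\!\left(r,\frac{\mathcal{D}_q^{j} f}{f}\right) + \sum_{j=1}^{n} N\!\left(r,\frac{\mathcal{D}_q^{j} f}{f}\right) + O(1).$$
By the definition of $\varphi$-order and the dominance hypothesis $\rho_\varphi(a_0) > \max_{j\geq 1}\rho_\varphi(a_j)$, the first sum will be asymptotically negligible compared with $T(r,a_0)$ at the level of $\varphi$-order; the main task is therefore to control the remaining two sums by a quantity of the form $O(\varphi(r)^{\rho_\varphi(f)/\alpha_{\varphi,s}^{n} + \varepsilon})$.

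For the proximity contribution I would use the telescoping identity
$$\frac{\mathcal{D}_q^{j} f}{f} \;=\; \prod_{i=1}^{j}\frac{\mathcal{D}_q^{i} f}{\mathcal{D}_q^{i-1} f}$$
together with $n$ applications of the $\varphi$-order AW-type log-derivative estimate from Section~\ref{lemmas}. Each application bounds a single factor in terms of the characteristic of $\mathcal{D}_q^{i-1} f$ evaluated at the dilated radius $s(r)$, and the defining relation $\log\varphi(s(r)) \leq \log\varphi(r)/(\alpha_{\varphi,s}-\varepsilon)$ valid for large $r$ shows that the effective $\varphi$-order is amplified by the factor $1/\alpha_{\varphi,s}$ at each step, so after $n$ iterations the exponent has grown to $\rho_\varphi(f)/\alpha_{\varphi,s}^{n}$. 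The counting-function sum is treated analogously, using the lemmas on AW-type counting functions from Section~\ref{N--} to express $N(r,\mathcal{D}_q^{j}f/f)$ in terms of counting functions of $f$ at the dilated radius, producing bounds of comparable size.

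Combining these estimates with the strict dominance of $\rho_\varphi(a_0)$, evaluated along a sequence $r_k \to \infty$ on which $T(r_k,a_0) \geq \varphi(r_k)^{\rho_\varphi(a_0)-\varepsilon}$, yields $\rho_\varphi(a_0) \leq \rho_\varphi(f)/\alpha_{\varphi,s}^{n}$, which is \eqref{a-1-2.1}. For the entire case \eqref{a-2-2.1}, the counting-function terms vanish identically and the strengthened log-derivative lemma carries an extra saving of the form $\varphi(r)^{-\gamma_{\varphi,s}}$ at each application, which accumulates to the additive term $\alpha_{\varphi,s}^{n}\gamma_{\varphi,s}$. In part (b), the hypothesis $\limsup_{r\to\infty} s(r)/r < \infty$ means that, up to a bounded dilation, the last application of the log-derivative lemma can be performed at radius $r$ rather than $s(r)$, saving one factor of $1/\alpha_{\varphi,s}$ and producing the bound $\alpha_{\varphi,s}^{n-1}\rho_\varphi(a_0)$.

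The main obstacle I anticipate is the delicate propagation of both the proximity and the counting bounds through the $n$-fold iteration, ensuring that the accumulated exponent ends at exactly $1/\alpha_{\varphi,s}^{n}$ (respectively $1/\alpha_{\varphi,s}^{n-1}$ in part (b)) and that the entire-function refinement correctly produces $\alpha_{\varphi,s}^{n}\gamma_{\varphi,s}$. A related technical point is the interplay between the convexity and differentiability of $s$, the subadditivity of $\varphi$, and the global restrictions \eqref{general-restriction} and \eqref{assumption}, which together are needed to keep $\log\varphi(s(r))$, $\log(s(r)/r)$ and $\log\varphi(r)$ in usable relations throughout the argument.
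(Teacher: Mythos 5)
Your route for \eqref{a-1-2.1} is essentially the paper's own: divide \eqref{AW-q-diff} by $f$, telescope $\mathcal{D}_q^{j}f/f$ into the factors $\mathcal{D}_q^{i}f/\mathcal{D}_q^{i-1}f$, apply the $\varphi$-order lemma on the logarithmic difference (Lemma~\ref{m-AW}) to each factor with a $1/\alpha_{\varphi,s}$ amplification of the exponent per step, and compare against a sequence on which $T(r,a_0)$ realizes its $\varphi$-order. The paper formalizes the iteration as the inductive bound \eqref{rho_var,k} on $\rho_\varphi(\mathcal{D}_q^{k}f)$ (via Remark~\ref{2.8-re} and Lemma~\ref{T-D_f}), and it controls the pole contribution through $\sum_j T(r,\mathcal{D}_q^{j}f)$ rather than through $\sum_j N(r,\mathcal{D}_q^{j}f/f)$, but these are equivalent bookkeeping choices; for \eqref{a-1-2.1} your argument goes through.

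The genuine gap is in the entire-coefficient refinement \eqref{a-2-2.1}. You justify it by saying that ``the counting-function terms vanish identically'', but they do not: only the coefficients are entire, while $f$ is still meromorphic, so each $N(r,\mathcal{D}_q^{j}f/f)$ is in general nonzero (it counts, in particular, zeros of $f$ that are not cancelled in the quotient). What vanishes is $N(r,a_0)$. The paper exploits this by never applying $T$ to both sides of the equation: since $T(r,a_0)=m(r,a_0)+O(1)$ for entire $a_0$, it suffices to bound
\begin{equation*}
m(r,a_0)\leq \max_{1\leq j\leq n}\{m(r,a_j)\}+\sum_{1\leq j\leq n} m\left(r,\frac{\mathcal{D}_q^{j}f}{f}\right)+O(1),
\end{equation*}
so that no counting function ever enters the estimate. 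In your decomposition the sum $\sum_j N(r,\mathcal{D}_q^{j}f/f)$ can only be bounded through $N(r,1/f)+N(r,\mathcal{D}_q^{j}f)\lesssim T(r,f)+\cdots$, i.e.\ by a term of size $\varphi(r)^{\rho_\varphi(f)+\varepsilon}$ that carries no $\gamma_{\varphi,s}$-saving; the conclusion then degrades back to \eqref{a-1-2.1}. The loss is real, not cosmetic: take $\varphi(r)=\log r$ and $s(r)=r^2$, so that $\alpha_{\varphi,s}=\gamma_{\varphi,s}=1$. Then \eqref{a-2-2.1} is the Chiang--Feng bound $\rho_{\log}(f)\geq\rho_{\log}(a_0)+1$ of Theorem~\ref{CF12.4-or}, which cannot be extracted from any upper bound for $T(r,a_0)$ containing a summand comparable to $T(r,f)$.

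A secondary inaccuracy concerns part (b). You say the bounded dilation allows ``the last application'' of the lemma to be performed at radius $r$, saving one factor $1/\alpha_{\varphi,s}$. In fact, in case (b) none of the applications involves the radius $s(r)$: Lemma~\ref{m-AW}(b) and Lemma~\ref{T-D_f}(b) work entirely at radius $O(r)$, with subadditivity absorbing the bounded factor, so no step is amplified. (Indeed, subadditivity of $\varphi$ together with $s(r)=O(r)$ forces $\alpha_{\varphi,s}=1$, so the conclusion of (b) simply reads $\rho_\varphi(f)\geq\rho_\varphi(a_0)$.) The exponent $\rho_{\varphi,n-1}$ in the paper's case (b) comes from the a priori bound \eqref{rho_var,k} on the orders of the iterates $\mathcal{D}_q^{k}f$, not from a dilated versus undilated final step; your formula agrees with the theorem, but the mechanism you describe is not the one producing it.
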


\begin{remark}
For  certain $\varphi(r)$,  for example,  for $\varphi(r)=\log^\alpha r$, where $\alpha\in(1,2]$, the conclusion of Theorem \ref{AW-th12.4}(a) is stronger than that of Theorem \ref{AW-th12.4}(b) due to  different choices of $s(r)$.   If the coefficients $a_0,\ldots, a_n$  are entire, then it follows from \eqref{liminf} and \eqref{a-2-2.1} that $\rho_\varphi (f) \geq \rho_\varphi(a_0)+1/\alpha$ in  Theorem \ref{AW-th12.4}(a) when choosing $s(r)=r^2$, which is stronger than the conclusion $\rho_\varphi(f)\geq\rho_\varphi(a_0)$ in Theorem \ref{AW-th12.4}(b) when choosing $s(r)=2r$.

On the other hand, the opposite is true for some suitable $\varphi(r)$. For instance,  choose $ \varphi(r)=r^\beta$, where $\beta\in (0,1]$, along with $s(r)=2r$ and $s(r)=r^2$, respectively. Then  we get $\rho_{\varphi}(f)\geq  \rho_{\varphi}(a_0)$ from Theorem \ref{AW-th12.4}(b), which is stronger than the conclusion $\rho_{\varphi}(f)\geq (1/2)^n\rho_{\varphi}(a_0)$  in Theorem \ref{AW-th12.4}(a), which in turn follows from \eqref{liminf} and  \eqref{a-1-2.1}.
\end{remark}

 The following result is a  growth estimate for meromorphic solutions of the non-homogeneous equations \eqref{AW-q-diff-non}.
\begin{theorem}\label{AW-th12.4-non}
Suppose that $\varphi(r)$ is subadditive. Let $a_0,\ldots,a_{n}$ be meromorphic functions of finite $\varphi$-order such that 
    $$
    \rho_{\varphi}(a_0)>\max_{1\leq  j\leq n+1}\{\rho_\varphi(a_j)\}.
    $$
   	If $f$ is a non-constant meromorphic solution 
	of \eqref{AW-q-diff-non}, then $\rho_\varphi(f)\geq \alpha_{\varphi,s}^{n-1} \rho_\varphi(a_0)$.
\end{theorem}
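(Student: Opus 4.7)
The plan is to adapt the argument for Theorem~\ref{AW-th12.4}(b) to accommodate the non-homogeneous right-hand side. Dividing \eqref{AW-q-diff-non} by $f(x)$ and rearranging gives
\[
-a_0(x) \;=\; \sum_{j=1}^{n} a_j(x)\,\frac{\mathcal{D}_q^{j}f(x)}{f(x)} \;-\; \frac{a_{n+1}(x)}{f(x)}.
\]
Applying the proximity function, together with $m(r,g_1 g_2)\leq m(r,g_1)+m(r,g_2)$ and $m(r,1/f)\leq T(r,f)+O(1)$ from Jensen's formula, yields
\[
m(r,a_0) \;\leq\; \sum_{j=1}^{n}\Big(m(r,a_j) + m\!\big(r,\mathcal{D}_q^{j}f/f\big)\Big) + T(r,a_{n+1}) + T(r,f) + O(1).
\]
Adding $N(r,a_0)$ to both sides converts $m(r,a_0)$ into $T(r,a_0)$, with the extra $N(r,a_0)\leq T(r,a_0)$ absorbed harmlessly into the left-hand side.

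Next, argue by contradiction: suppose $\rho_\varphi(f) < \alpha_{\varphi,s}^{n-1}\rho_\varphi(a_0)$. Because $\alpha_{\varphi,s}\in[0,1]$, this in particular forces $\rho_\varphi(f) < \rho_\varphi(a_0)$, so $T(r,f)$ grows strictly more slowly than $T(r,a_0)$ along a sequence $r_k\to\infty$ realizing $\rho_\varphi(a_0)$; the terms $T(r,a_{n+1})$ and $m(r,a_j)$ for $1\leq j\leq n$ are likewise controlled by the hypothesis $\rho_\varphi(a_0)>\max_{j\geq 1}\rho_\varphi(a_j)$. For the remaining sums $m(r,\mathcal{D}_q^{j}f/f)$ I would invoke the iterated AW-type $\varphi$-order logarithmic difference lemma from Section~\ref{lemmas}, in the same form used to prove part (b) of Theorem~\ref{AW-th12.4}; together with the scaling $\log\varphi(r)/\log\varphi(s(r))\to\alpha_{\varphi,s}$, this shows that the total contribution of these sums has effective $\varphi$-order at most $\rho_\varphi(f)/\alpha_{\varphi,s}^{n-1}$, which by the contradiction hypothesis is strictly less than $\rho_\varphi(a_0)$. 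Hence the entire right-hand side is $o(T(r_k,a_0))$, contradicting $\limsup_{k\to\infty}\log T(r_k,a_0)/\log\varphi(r_k) = \rho_\varphi(a_0)$.

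The main obstacle, and the only real departure from the homogeneous case, is the $T(r,f)$ summand arising from $m(r,1/f)$. It does not alter the final exponent---which is inherited from the same iteration of the logarithmic difference lemma already used in Theorem~\ref{AW-th12.4}(b)---but it must be dominated by $T(r,a_0)$, and this is precisely what the contradiction hypothesis $\rho_\varphi(f)<\alpha_{\varphi,s}^{n-1}\rho_\varphi(a_0)\leq\rho_\varphi(a_0)$ guarantees via $\alpha_{\varphi,s}\leq 1$. A secondary technical point is that $f$ is only meromorphic: the pole and zero contributions to $m(r,\mathcal{D}_q^{j}f/f)$ must be tracked via the counting-function and $\mathcal{D}_q f$-characteristic lemmas in Section~\ref{N--}, and the exceptional sets of radii on which the iterated log-difference estimate might fail must be shown not to obstruct evaluation along the chosen sequence $r_k$.
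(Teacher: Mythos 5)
Your overall route is the same as the paper's: divide \eqref{AW-q-diff-non} by $f$, telescope $m(r,\mathcal{D}_q^jf/f)$ into successive quotients controlled by Lemma~\ref{m-AW}(b) and the iterated order bound \eqref{rho_var,k}, and compare $\varphi$-orders along a sequence realizing $\rho_\varphi(a_0)$ (the paper runs this directly rather than by contradiction, after fixing an $s$ with $\limsup_{r\to\infty}s(r)/r<\infty$, but that difference is cosmetic). The genuine gap is your treatment of $N(r,a_0)$. Adding $N(r,a_0)$ to both sides yields $T(r,a_0)\leq[\text{your RHS}]+N(r,a_0)$, and there is no way to ``absorb'' $N(r,a_0)$ into the left-hand side: bounding it by $T(r,a_0)$ makes the inequality vacuous, and leaving it unestimated destroys the contradiction, since along your sequence $r_k$ the term $N(r_k,a_0)$ may itself be comparable to $T(r_k,a_0)$. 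This matters precisely because the coefficients are allowed to be meromorphic: the growth of $a_0$ may sit almost entirely in its poles, so $m(r,a_0)$ can have $\varphi$-order strictly smaller than $\rho_\varphi(a_0)$, and an upper bound on $m(r,a_0)$ alone can never contradict the definition of $\rho_\varphi(a_0)$. The repair is what the paper actually does: read off from the equation that poles of $a_0$ can occur only at poles of $a_1,\dots,a_{n+1}$, at poles of $\mathcal{D}_q^jf$, or at zeros of $f$, so that
\[
N(r,a_0)\leq\sum_{j=1}^{n+1}N(r,a_j)+\sum_{j=1}^{n}N\bigl(r,\mathcal{D}_q^jf\bigr)+N\bigl(r,1/f\bigr),
\]
and then control $N(r,\mathcal{D}_q^jf)\leq T(r,\mathcal{D}_q^jf)$ by Lemma~\ref{T-D_f}(b) together with \eqref{rho_var,k}. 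This is exactly the origin of the sum $\sum_{j=0}^nT(r,\mathcal{D}_q^jf)$ in the paper's displayed estimate for $T(r,a_0)$, which your proposal lacks.

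Two smaller points. First, your worry about ``exceptional sets of radii'' is unfounded: the proximity-function estimate Lemma~\ref{m-AW}(b) holds for all $r\geq R_0$ with no exceptional set; only the pointwise Lemma~\ref{log+} carries one, and it is not needed here. Second, you never specify which $s$ you work with: to invoke the part~(b) machinery you must, as the paper does, choose $s$ with $\limsup_{r\to\infty}s(r)/r<\infty$, and your ``scaling $\log\varphi(r)/\log\varphi(s(r))\to\alpha_{\varphi,s}$'' is only a limit inferior, not a limit. Once the missing estimate for $N(r,a_0)$ is inserted, your contradiction argument does go through and is equivalent to the paper's direct one; note also that it is the non-constancy of $f$ that absorbs the residual $O(\log r)$ error term, since $\log r=O(T(r,f))$ for any non-constant meromorphic $f$.
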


The proofs of Theorems~\ref{AW-th12.4} and \ref{AW-th12.4-non} in Section~\ref{proofs} are based on an AW-type lemma on the
logarithmic difference discussed in Section~\ref{lemmas} as well as on estimates for AW-type counting functions discussed in Section~\ref{N--}.


\section{Estimates for the Askey-Wilson type\\ logarithmic  difference }\label{lemmas}

Lemma~\ref{m-AW} below is an AW-type lemma on the logarithmic difference, which reduces to \cite[Theorem 3.1]{CF} 
when choosing $\varphi(r)=\log r$ and $s(r)=r^2$. The proof uses the notation  $g(r)\lesssim h(r)$ to express 
that  there exists a constant $C\geq 1$ such that $g(r)\leq Ch(r)$ for all $r\geq R_0$. 

\begin{lemma}\label{m-AW}
Let $f$ be a meromorphic function of finite $\varphi$-order $\rho_{\varphi}(f)$ such that $\mathcal{D}_q f\not\equiv 0$. Let
$\alpha_{\varphi,s}$ and $\gamma_{\varphi,s}$ be the constants in \eqref{liminf}, let  $\varepsilon>0$,
and denote $|x|=r$. 
\begin{itemize}
\item[\textnormal{(a)}] If $\displaystyle\limsup_{r\to\infty}\frac{s(r)}{r}=\infty$ and if $s(r)$
is convex and differentiable, then
	\begin{equation*}	
	m\left(r,\frac{\mathcal{D}_q f(x)}{f(x)}\right)
	=O\left(\frac{\varphi(s(r))^{\rho_\varphi(f)+\frac{\varepsilon}{2}}}{\log\frac{s(r)}{r}}+1\right)
	=O\left({\varphi(s(r))^{\rho_\varphi(f)-\alpha_{\varphi,s}\gamma_{\varphi,s}+\varepsilon}}\right).
    \end{equation*}
\item[\textnormal{(b)}] If $\displaystyle\limsup_{r\to\infty}\frac{s(r)}{r}<\infty$ and if
$\varphi(r)$ is subadditive, then
    \begin{equation*}
	m\left(r,\frac{\mathcal{D}_q f(x)}{f(x)}\right)=O\left(\varphi(r)^{\rho_\varphi(f)+\varepsilon}\right).
    \end{equation*}
\end{itemize}
\end{lemma}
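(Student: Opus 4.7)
The plan is to specialize Lemma~\ref{Le-4.2-CF} to $R=s(r)$, integrate the resulting pointwise estimate over $|x|=r$ so that the left-hand side becomes $m(r,\mathcal{D}_q f/f)$, and then control each right-hand side term by a power of $\varphi(s(r))$ by combining the definition of $\rho_\varphi(f)$ with standard Jensen-type conversions of the counting functions.

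First I would verify the admissibility hypothesis $2(|q^{1/2}|+|q^{-1/2}|)|x|<R$ of Lemma~\ref{Le-4.2-CF}: with $R=s(r)$ and $|x|=r$, this holds for $r$ large because $\limsup s(r)/r=\infty$ in case (a) and $\liminf s(r)/r>1$ in case (b), after a harmless fixed rescaling absorbed into the implied constants. With $|x|=r$, the coefficients in groups~1 and~2 of \eqref{(35)CF} are comparable to $r/s(r)$ in case (a) and are merely bounded in case (b). For the three $\alpha_1$-sums in groups~3--5 I would use the scalar bound
\begin{equation*}
\int_0^{2\pi}\frac{d\theta}{|re^{i\theta}-c|^{\alpha_1}}\leq \frac{C_{\alpha_1}}{\max(r,|c|)^{\alpha_1}},\qquad \alpha_1\in(0,1),
\end{equation*}
applied with $c$ replaced by the relevant shifted center; the additive shifts $c(q)q^{\pm 1/2}z^{-1}$ depend on $\theta$ through the inverse Joukowski map $z=z(x)$, but the fixed branch forces $|z|\geq 1$, so these shifts are uniformly bounded (and are $O(1/r)$ for $|x|=r$ large) and can be peeled off by the triangle inequality. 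A Stieltjes integration by parts against $dn(t,f)$ then converts each $\alpha_1$-sum into a contribution bounded by a constant multiple of $n(s(r),f)+n(s(r),1/f)$.

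The heart of the argument is converting these counting and proximity functions into the target power of $\varphi(s(r))$. The definition of $\rho_\varphi(f)$ gives $m(s(r),f)\leq T(s(r),f)\leq \varphi(s(r))^{\rho_\varphi(f)+\varepsilon/2}$ for $r$ large, and analogously for $m(s(r),1/f)$. For the counting functions I would use the Jensen-type conversion $n(R,f)\log(R'/R)\leq N(R',f)\leq T(R',f)$ with $R=s(r)$ and a suitable auxiliary radius $R'$. In case (a), the factor $1/\log(s(r)/r)$ is secured by choosing $R'$ so that $\log(R'/s(r))\gtrsim \log(s(r)/r)$ while $\varphi(R')\lesssim\varphi(s(r))$; the convexity and differentiability of $s$ make such a choice of $R'$ available, and the inequality $r/s(r)\leq 1/\log(s(r)/r)$ (from $\log t\leq t$) shows that the contribution of groups~1--2 is also subsumed by $\varphi(s(r))^{\rho_\varphi(f)+\varepsilon/2}/\log(s(r)/r)$. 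The equivalent form then follows from \eqref{liminf}: for $\delta>0$ and $r$ large, $\log\varphi(r)\geq(\alpha_{\varphi,s}-\delta)\log\varphi(s(r))$ and $\log\log(s(r)/r)\geq(\gamma_{\varphi,s}-\delta)\log\varphi(r)$, which combine to give $1/\log(s(r)/r)\leq\varphi(s(r))^{-\alpha_{\varphi,s}\gamma_{\varphi,s}+C\delta}$, and $\delta$ is absorbed into $\varepsilon$. For part (b), where $s(r)\leq Kr$, the subadditivity of $\varphi$ yields $\varphi(Ks(r))\lesssim\varphi(r)$, no logarithmic refinement is available or needed, and the proof collapses to the direct bound $\varphi(r)^{\rho_\varphi(f)+\varepsilon}$.

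The main obstacle I anticipate is retaining the factor $1/\log(s(r)/r)$ in case (a) when bounding the integrated sums in groups~3--5. A careless conversion such as $n(s(r),f)\lesssim T(2s(r),f)\lesssim\varphi(s(r))^{\rho_\varphi(f)+\varepsilon/2}$ loses the logarithmic gain entirely, so the delicate point is the balancing choice of the auxiliary radius $R'$ in the Jensen conversion: large enough that $\log(R'/s(r))$ is comparable to $\log(s(r)/r)$, yet small enough that $\varphi(R')$ stays within a bounded factor of $\varphi(s(r))$. The convexity and differentiability hypothesis on $s$ enters decisively here. A secondary subtlety is the $\theta$-dependence of the shifts inside the sums of groups~4--5, which requires uniform control of $|z^{-1}|$ on the fixed branch of the Joukowski map.
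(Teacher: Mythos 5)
Your overall skeleton (integrate \eqref{(35)CF} over the circle, handle the $\alpha_1$-sums via the integral bounds (63)--(64) of \cite{CF}, convert the $m$- and $n$-terms into powers of $\varphi$) is the paper's, and your case (b) is essentially the paper's proof: the paper takes $R=Br$ with $B$ as in \eqref{B-defi} rather than a rescaled $s(r)$, and subadditivity absorbs all constants, exactly as you say. The genuine gap is in case (a), at the very step you single out as the main obstacle. Setting $R=s(r)$ makes the integrated groups 3--5 contribute $n(s(r),f)+n(s(r),1/f)$ with no decaying prefactor, and to bound $n(s(r),f)$ you must pass to a radius $R'>s(r)$ via $n(s(r),f)\log(R'/s(r))\leq T(R',f)+O(1)$. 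You then need $\varphi(R')$ to be controlled by $\varphi(s(r))$, but no hypothesis of case (a) provides this: subadditivity of $\varphi$ is \emph{not} assumed in (a), and the convexity and differentiability of $s$, which you invoke to produce $R'$, constrain $s$, not $\varphi$ --- they say nothing about how $\varphi$ grows beyond $s(r)$. Indeed, a non-decreasing $\varphi$ with $\log t\leq\varphi(t)\leq t$ may have $\log\varphi$ jump by unboundedly large factors at a sparse sequence of radii $t_k$ (for instance $\varphi\equiv t_{k-1}$ on $[t_{k-1},t_k)$ with $\log t_k/\log t_{k-1}\to\infty$); choosing $r_k$ with $s(r_k)$ just below $t_k$ (possible since $s$ is continuous and increasing), every $R'$ satisfying $\log(R'/s(r_k))\geq c\log(s(r_k)/r_k)$ for a fixed $c>0$ lies beyond $t_k$, so $\log\varphi(R')\geq\log t_k\gg\log t_{k-1}=\log\varphi(s(r_k))$, and the conversion yields nothing of the form $\varphi(s(r_k))^{\rho_\varphi(f)+\varepsilon/2}$. (Your plan does work for very regular gauges such as $\varphi(r)=\log r$, where $\varphi(t^{1+c})\leq(1+c)\varphi(t)$ --- which is why Chiang--Feng can afford a single radius --- but the lemma is asserted for all admissible $\varphi$.)

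The missing idea, and the place where convexity and differentiability of $s$ are actually used in the paper, is to run Lemma~\ref{Le-4.2-CF} at an intermediate radius $R=u(r)$ strictly \emph{below} $s(r)$, so that no radius above $s(r)$ ever appears. By the proof of \cite[Lemma~3.1(a)]{HWWY} there exist non-decreasing $u,v$ with $r<u(r)<s(r)$, $u(r)/r\to\infty$, $2\log(u(r)/r)\leq\log(s(r)/r)\leq 2u(r)/r$, and $\tfrac12 s(r)\leq v(u(r))\leq s(r)$. Then $n(u(r),f)\leq N(v(u(r)),f)/\log\bigl(v(u(r))/u(r)\bigr)\lesssim T(s(r),f)/\log(s(r)/r)$, which is \eqref{n-v-(a)}, while the groups 1--2 terms now carry the factor $r/u(r)\leq 2/\log(s(r)/r)$; every Nevanlinna functional is evaluated at a radius at most $s(r)$, where $T(s(r),f)\leq\varphi(s(r))^{\rho_\varphi(f)+\varepsilon/2}$ follows directly from the definition of the $\varphi$-order, with no regularity of $\varphi$ required. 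Your derivation of the second bound in (a) from \eqref{liminf} coincides with the paper's; note only that one should also invoke $\rho_\varphi(f)\geq\alpha_{\varphi,s}\gamma_{\varphi,s}$ from \cite[Corollary~4.3]{HWWY}, so that the right-hand side tends to infinity and the additive $+1$ from the first bound can be absorbed into the final $O$-term.
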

 
\begin{proof}
(a) By the proof of  \cite[Lemma~3.1(a)]{HWWY}, there exist non-decreasing functions 
$u,v:[1,\infty)\to(0,\infty)$ with the following properties:
\begin{itemize}
\item[(1)] $r<u(r)<s(r)$ and $r<v(r)<s(r)$ for all $r\geq R_0$,
\item[(2)] $u(r)/r\to\infty$ and $v(r)/r\to\infty$ as $r\to\infty$,
\item[(3)] $2^{-1}s(r)\leq v(u(r))\leq s(r)$ for all $r\geq R_0$,
\item[(4)] $2\log (u(r)/r)\leq \log (s(r)/r)\leq 2u(r)/r$ for all $r\geq R_0$.
\end{itemize} 
Using the standard estimate
    \begin{equation*}\label{n-esti-v(r)}
	N(v(r),f)-N(r,f)=\int_r^{v(r)}\frac{n(t,f)}{t}\, dt
	\geq n(r,f)\log\frac{v(r)}{r}
	\end{equation*}
and  the properties (3) and (4), we deduce that
    \begin{equation}\label{n-v-(a)}
    n(u(r),f)	\leq \frac{T(s(r),f)}{\log\frac{s(r)}{2r}-\log\frac{u(r)}{r}}\lesssim \frac{T(s(r),f)}{\log\frac{s(r)}{r}}\lesssim\frac{\varphi(s(r))^{\rho_\varphi(f)+\frac{\varepsilon}{2}}}{\log\frac{s(r)}{r}},
	\end{equation}
and similarly for $n(u(r),1/f)$. Choose $R=u(r)$. We integrate \eqref{(35)CF} from $0$ to $2\pi$, and we make use of 
the properties (1) and (4) together with \eqref{n-v-(a)} and formulas (63)--(64) in \cite{CF}, and obtain
    \begin{equation*}\label{dqf}
    \begin{split}
    m\left(r,\frac{\mathcal{D}_q f(x)}{f(x)}\right)&\lesssim\frac{T(u(r),f)}{u(r)/r}+ {n(u(r),f)+n(u(r),1/f)} +1\\
    &\lesssim\frac{\varphi(s(r))^{\rho_\varphi(f)+\frac{\varepsilon}{2}}}{\log\frac{s(r)}{r}}+1.
    \end{split}
    \end{equation*} 
This proves the first identity in Case (a).

From \eqref{liminf}, we get
      $$
      \alpha_{\varphi,s}\gamma_{\varphi,s}\leq
      \liminf_{r\to\infty}\left(\frac{\log \varphi(r)}{\log\varphi(s(r))}\cdot\frac{\log\log\frac{s(r)}{r}}{\log \varphi(r)}\right)
      =\liminf_{r\to\infty}\frac{\log\log\frac{s(r)}{r}}{\log\varphi(s(r))},
      $$
and so
     $$
     \log\frac{s(r)}{r}\geq\varphi(s(r))^{\alpha_{\varphi,s}\gamma_{\varphi,s}-\frac{\varepsilon}{2}},\quad r\geq R_0.
     $$
Recall from \cite[Corollary~4.3]{HWWY} that, for a non-constant meromorphic function $f$ of finite $\varphi$-order $\rho_\varphi(f)$, we have $\rho_\varphi(f)\geq \alpha_{\varphi,s} \gamma_{\varphi,s}$. Thus
      \begin{equation}\label{unbounded-var}
      \frac{\varphi(s(r))^{\rho_\varphi(f)+\frac{\varepsilon}{2}}}{\log\frac{s(r)}{r}}\leq 
      \varphi(s(r))^{\rho_\varphi(f)-\alpha_{\varphi,s}\gamma_{\varphi,s}+\varepsilon},\quad r\geq R_0,
      \end{equation}
where the right-hand side tends to infinity as $r\to\infty$.
This proves the second identity in Case (a).

(b) By the assumptions on $s(r)$, there exists a constant $C\in (1,\infty)$ such that
$r<s(r)<Cr$ for all $r\geq R_0$. We choose $R=Br$, where 
     \begin{equation}\label{B-defi}
     B=\max\{[C],[ 2(|q^{1/2}|+|q^{-1/2}|)]\}+1
     \end{equation}
is an integer.  
Integrating \eqref{(35)CF} from $0$ to $2\pi$ and making use of  formulas (63)--(64) in \cite{CF} together with
    \begin{equation}\label{2br}
	T(2r,f)\geq\int_r^{2r}\frac{n(t,f)}{t}\, dt
	\geq n(r,f)\log 2,
	\end{equation}
we obtain 
    \begin{equation}\label{m-esti-var(2Br)}
    \begin{split}
    m\left(r,\frac{\mathcal{D}_q f(x)}{f(x)}\right) &\lesssim  T(Br,f)+n(Br,f)+n(Br,1/f)+1\\
    &\lesssim\varphi(2Br)^{\rho_\varphi(f)+\varepsilon}+1.
    \end{split}
    \end{equation}
Since the subadditivity of $\varphi$  yields $\varphi(2Br)\leq 2B\varphi(r)$, the assertion  follows from \eqref{m-esti-var(2Br)}.  
This completes the proof.
\end{proof}

Lemma~\ref{log+} below is a pointwise estimate for the AW-type logarithmic difference that holds outside of an exceptional set.
The result reduces to \cite[Theorem 3.2]{CF} when choosing $\varphi(r)=\log r$ and $s(r)=r^2$.

\begin{lemma}\label{log+}
Let $f$ be a meromorphic function of finite $\varphi$-order $\rho_{\varphi}(f)$ such that $\mathcal{D}_q f\not\equiv 0$. 
Let $\alpha_{\varphi,s}>0$ and $\gamma_{\varphi,s}$ be the constants in \eqref{liminf}, let $\varepsilon>0$,
and denote $|x|=r$.  Suppose that $\varphi(r)$ is continuous and satisfies
    \begin{equation}\label{log varphi/log r=0}
    \displaystyle\limsup_{r\to\infty}\frac{\log \varphi(r)}{\log r}=0.
    \end{equation}
 \begin{itemize}
\item[\textnormal{(a)}] If $\displaystyle\limsup_{r\to\infty}\frac{s(r)}{r}=\infty$ and if $s(r)$
is convex and differentiable, then
	\begin{equation*}	
	\log^+\left|\frac{\mathcal{D}_q f(x)}{f(x)}\right|
	=O\left(\frac{\varphi(s(r))^{\rho_\varphi(f)+\frac{\varepsilon}{2}}}{\log\frac{s(r)}{r}}+1\right)
	=O\left({\varphi(s(r))^{\rho_\varphi(f)-\alpha_{\varphi,s}\gamma_{\varphi,s}+\varepsilon}}\right)
    \end{equation*}
    holds outside of an exceptional set of finite logarithmic measure.
\item[\textnormal{(b)}] If $\displaystyle\limsup_{r\to\infty}\frac{s(r)}{r}<\infty$ and if
$\varphi(r)$ is subadditive, then
    \begin{equation*}
	\log^+\left|\frac{\mathcal{D}_q f(x)}{f(x)}\right|=O\left(\varphi(r)^{\rho_\varphi(f)+\varepsilon}\right)
    \end{equation*}
       holds outside of an exceptional set of finite logarithmic measure.
\end{itemize}
\end{lemma}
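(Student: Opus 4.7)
The plan is to derive Lemma~\ref{log+} from the pointwise estimate~\eqref{(35)CF} of Lemma~\ref{Le-4.2-CF} by bounding each of its six summands pointwise on $|x|=r$ outside an exceptional set of finite logarithmic measure. Throughout, I would adopt the same choice of $R$ as in the proof of Lemma~\ref{m-AW}: $R=u(r)$ in case~(a) and $R=Br$ (with $B$ as in~\eqref{B-defi}) in case~(b). This alignment is crucial, because it lets the asymptotics already established in Lemma~\ref{m-AW} be reused verbatim once the sums over zeros and poles have been tamed.

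The first two lines of~\eqref{(35)CF} (the $m$-term and the $n$-term with their rational prefactors) require no exceptional set: the prefactors are $O(r/R)$ pointwise on $|x|=r$, and the Nevanlinna quantities $T(R,f)$, $n(R,f)$, $n(R,1/f)$ admit the same upper bounds used in the proof of Lemma~\ref{m-AW}, namely~\eqref{n-v-(a)} in case~(a) and~\eqref{2br} combined with the subadditivity of $\varphi$ in case~(b). This immediately reproduces the asserted target bounds for these two lines.

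The main obstacle is the pointwise control of the last three lines of~\eqref{(35)CF}, each of which has the form $|x|^{\alpha_1}\sum_{|c_n|<R}|x-a_n|^{-\alpha_1}$ for an appropriate sequence $\{a_n\}$ shifted from the combined zero/pole sequence $\{c_n\}$. Here I would adapt the argument of \cite[Theorem~3.2]{CF}. The assumption~\eqref{log varphi/log r=0}, combined with the finite $\varphi$-order of $f$, forces $\log T(r,f)/\log r\to 0$, so $f$ has classical order zero; consequently $n(r,f)$ is subpolynomial and $\sum|c_n|^{-1-\tau}$ converges for every $\tau>0$. A Gundersen-type covering argument then excises from $\mathbb{C}$ a union of disks of radius $|c_n|^{-\tau}$ around the zeros and poles; the radial projection of this union has finite logarithmic measure, and on its complement one has $|x-c_n|\geq |c_n|^{-\tau}$, so each sum is dominated, up to the prefactor $|x|^{\alpha_1}$, by a subpolynomial-in-$r$ quantity that can be absorbed into $\varphi(s(r))^{\varepsilon/2}$ in case~(a) and into $\varphi(r)^{\varepsilon/2}$ in case~(b).

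The most delicate point is to coordinate the three exceptional sets arising from the original sequence $\{c_n\}$ and from the two shifted sequences $\{q^{\mp 1/2}c_n\pm c(q)q^{\pm 1/2}z^{-1}\}$, and to choose the parameters $\alpha_1$ and $\tau$ simultaneously so that, for each prescribed $\varepsilon>0$, every stray polynomial-in-$r$ factor is subordinate to the target $\varphi$-quantity. This balancing is the genuine technical content of the proof; once it is carried out, summing the six individual estimates finishes the argument.
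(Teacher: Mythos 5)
Your overall architecture --- reuse the pointwise inequality \eqref{(35)CF} with the same choices $R=u(r)$ and $R=Br$ as in Lemma~\ref{m-AW}, observe that the first two lines of \eqref{(35)CF} need no exceptional set, and excise neighbourhoods of the (shifted) zeros and poles to control the three sums --- is exactly the paper's plan. The gap is in how you excise. You take disks of \emph{absolute} radius $|c_n|^{-\tau}$, so that off the exceptional set the best you can say is $|x-c_n|\ge|c_n|^{-\tau}$, giving
\[
|x|^{\alpha_1}\sum_{|c_n|<R}\frac{1}{|x-c_n|^{\alpha_1}}
\le r^{\alpha_1}\,R^{\tau\alpha_1}\bigl(n(R,f)+n(R,1/f)\bigr).
\]
The dangling factor $r^{\alpha_1}$ is a fixed positive power of $r$, and under hypothesis \eqref{log varphi/log r=0} it can never be absorbed into any power of $\varphi$: since $s(r)\le r^2$ and $\log\varphi(r)/\log r\to 0$, every fixed power $\varphi(s(r))^{c}$ is $r^{o(1)}$, so $r^{\alpha_1}$ eventually dominates $\varphi(s(r))^{\varepsilon/2}$ (and $\varphi(r)^{\varepsilon/2}$ in case (b)) no matter how small you choose $\alpha_1$ and $\tau$; note also that $\alpha_1$ cannot be sent to $0$ with $r$, because the constant $C_{\alpha_1}$ in Lemma~\ref{Le-4.2-CF} depends on it. So your bound for the last three lines of \eqref{(35)CF} is polynomial in $r$, while the asserted bound is subpolynomial: the lemma does not follow. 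The repair is the paper's choice of \emph{relative} radii: one excises intervals of radius $|d_n|\varphi(|d_n|+3)^{-(\rho_\varphi(f)+\varepsilon)/\alpha_{\varphi,s}}$ about $|d_n|$, where $\{d_n\}=\{c_n\}\cup\{q^{1/2}c_n\}\cup\{q^{-1/2}c_n\}$; outside this set one has the relative separation $|x-d_n|\ge \tfrac12\,|x|\,\varphi(|x|+3)^{-(\rho_\varphi(f)+\varepsilon)/\alpha_{\varphi,s}}$, so the sums carry the factor $|x|^{-\alpha_1}$ (see \eqref{52} and \eqref{55}), which cancels the prefactor $|x|^{\alpha_1}$ exactly, and the choice $\alpha_1=\frac{\alpha_{\varphi,s}\varepsilon}{4(\rho_\varphi(f)+\varepsilon)}$ leaves only the harmless factor $\varphi(r+3)^{\varepsilon/4}$.

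There is a second, consequential gap: once the radii are made relative (as they must be), your route to finiteness of the logarithmic measure collapses. The logarithmic measure of the $n$-th excised interval is then comparable to $\varphi(|d_n|)^{-(\rho_\varphi(f)+\varepsilon)/\alpha_{\varphi,s}}$, and its summability is \emph{not} implied by order zero or by $\sum_n|c_n|^{-1-\tau}<\infty$; for $\varphi(r)=\log r$, say, $\sum_n(\log|d_n|)^{-K}<\infty$ is a far stronger statement. What is needed is precisely that $(\rho_\varphi(f)+\varepsilon)/\alpha_{\varphi,s}$ exceeds the $\varphi$-exponent of convergence $\lambda_\varphi$ of $\{d_n\}$, which the paper obtains from $n(r,h(cx))=n(|c|r,h(x))$ and \cite[Lemmas~4.1--4.2]{HWWY}, giving $\lambda_\varphi\le\rho_\varphi(f)/\alpha_{\varphi,s}<\infty$. (This is also why Lemma~\ref{log+} carries the hypothesis $\alpha_{\varphi,s}>0$, which your argument never uses --- a warning sign.) Your idea of handling the two shifted sums by separate exceptional sets is fine in spirit; the paper does it more economically by building $q^{\pm 1/2}c_n$ into $\{d_n\}$ and estimating $|x+c(q)q^{-1/2}z^{-1}-q^{-1/2}c_n|\ge |x-q^{-1/2}c_n|-|c(q)q^{-1/2}z^{-1}|$, the subtracted term being $O(1/|z|)$.
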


\begin{proof} 
We modify the proof of \cite[Theorem 3.2]{CF} as follows.

(a) Denote
    \begin{equation}\label{dn-def}
    \{d_n\}:=\{c_n\}\cup\{q^{1/2}c_n\}\cup\{q^{-1/2}c_n\},
    \end{equation}
where $\{c_n\}$ is the combined sequence of zeros and poles of $f$. Let
    $$
    E_n=\left\{r:r\in\left[|d_n|-\frac{|d_n|}{\varphi(|d_n|+3)^{\frac{\rho_\varphi(f)+\varepsilon}{\alpha_{\varphi,s}}}},
    \,|d_n|+\frac{|d_n|}{\varphi(|d_n|+3)^{\frac{\rho_\varphi(f)+\varepsilon}{\alpha_{\varphi,s}}}}\right]\right\}
    $$
and $E=\cup_n E_n$, where $ \alpha_{\varphi,s}\in(0,1]$ is defined in \eqref{liminf}.
In what follows, we consider $r\not\in E$.  We proceed to prove that
    \begin{equation}\label{|x-d_n|}
     |x-d_n|\geq \frac{|x|}{2\varphi(|x|+3)^{\frac{\rho_\varphi(f)+\varepsilon}{\alpha_{\varphi,s}}}},\quad |x|=r\geq R_0.
    \end{equation}
The proof is divided into three cases in each of which $|x|\geq R_0$. 
\begin{itemize}
\item[(1)] Suppose that $|x|<|d_n|-\frac{|d_n|}{\varphi(|d_n|+3)^{\frac{\rho_\varphi(f)+\varepsilon}{\alpha_{\varphi,s}}}}$. 
From \eqref{log varphi/log r=0}, the function $\frac{|x|}{\varphi(|x|+3)^{\frac{\rho_\varphi(f)+\varepsilon}{\alpha_{\varphi,s}}}}$ is increasing, and so
    \begin{eqnarray*}
    |x-d_n| &\geq& ||x|-|d_n||\geq \frac{|d_n|}{\varphi(|d_n|+3)^{\frac{\rho_\varphi(f)+\varepsilon}{\alpha_{\varphi,s}}}}
    \geq \frac{|x|}{2\varphi(|x|+3)^{\frac{\rho_\varphi(f)+\varepsilon}{\alpha_{\varphi,s}}}}.
    \end{eqnarray*}
    \item[(2)] Suppose that $|d_n|+\frac{|d_n|}{\varphi(|d_n|+3)^{\frac{\rho_\varphi(f)+\varepsilon}{\alpha_{\varphi,s}}}}
    \leq |x|-\frac{|x|}{\varphi(|x|+3)^{\frac{\rho_\varphi(f)+\varepsilon}{\alpha_{\varphi,s}}}}$. Clearly, 
    \begin{eqnarray*}
    |x-d_n| &\geq& \frac{|d_n|}{\varphi(|d_n|+3)^{\frac{\rho_\varphi(f)+\varepsilon}{\alpha_{\varphi,s}}}}
    +\frac{|x|}{\varphi(|x|+3)^{\frac{\rho_\varphi(f)+\varepsilon}{\alpha_{\varphi,s}}}}
    \geq \frac{|x|}{2\varphi(|x|+3)^{\frac{\rho_\varphi(f)+\varepsilon}{\alpha_{\varphi,s}}}}.
    \end{eqnarray*}
    \item[(3)]  Suppose that $|d_n|+\frac{|d_n|}{\varphi(|d_n|+3)^{\frac{\rho_\varphi(f)+\varepsilon}{\alpha_{\varphi,s}}}}< |x|$ and 
    $$
    |x|-\frac{|x|}{\varphi(|x|+3)^{\frac{\rho_\varphi(f)+\varepsilon}{\alpha_{\varphi,s}}}}
    \leq |d_n|+\frac{|d_n|}{\varphi(|d_n|+3)^{\frac{\rho_\varphi(f)+\varepsilon}{\alpha_{\varphi,s}}}}.
    $$ 
    Then we have $|x-d_n|\geq \frac{|d_n|}{\varphi(|d_n|+3)^{\frac{\rho_\varphi(f)+\varepsilon}{\alpha_{\varphi,s}}}}$ 
    and $|x|=|d_n|(1+o(1))$ as $|x|\to\infty$ (or as $n\to\infty$). This yields \eqref{|x-d_n|} by the 
    continuity~of~$\varphi(r)$.
\end{itemize}
Keeping in mind that $r\not\in E$, this completes the proof of \eqref{|x-d_n|}. 

Let $\alpha_1\in (0,1)$. From \eqref{|x-d_n|},
    \begin{equation}\label{52}
    \sum_{|c_n|<R}\frac{1}{|x-c_n|^{\alpha_1}}\leq
    \frac{2^{\alpha_1}\varphi(|x|+3)^{\frac{\alpha_1(\rho_\varphi(f)+\varepsilon)}{\alpha_{\varphi,s}}}}{|x|^{\alpha_1}}
    \left(n(R,f)+n(R,1/f)\right).
    \end{equation}
From \eqref{log varphi/log r=0}--\eqref{|x-d_n|}, we have, for all $|x|$ sufficiently large and hence for all $|z|$ sufficiently large,
    \begin{equation*}
    \begin{split}
    |x+c(q)q^{-1/2}z^{-1}-q^{-1/2}c_n|
    &\geq |x-q^{-1/2}c_n|-|c(q)q^{-1/2}z^{-1}|     \geq \frac{|x|}{3\varphi(|x|+3)^{\frac{\rho_\varphi(f)+\varepsilon}{\alpha_{\varphi,s}}}},
    \end{split}
    \end{equation*}
    and similarly for
    $
    |x-c(q)q^{1/2}z^{-1}-q^{1/2}c_n|,
    $
    where $c(q)=(q^{-1/2}-q^{1/2})/2  $.
Therefore, 
    \begin{eqnarray}
    &&\underset{|c_n|<R}{\sum}\frac{1}{|x+c(q)q^{-1/2}z^{-1}-q^{-1/2}c_n|^{\alpha_1}}+ 
    \underset{|c_n|<R}{\sum}\frac{1}{|x-c(q)q^{1/2}z^{-1}-q^{1/2}c_n|^{\alpha_1}}\nonumber\\
    &&\qquad\leq \frac{2\cdot3^{\alpha_1}\varphi(|x|+3)^{\frac{\alpha_1(\rho_\varphi(f)+\varepsilon)}{\alpha_{\varphi,s}}}}{|x|^{\alpha_1}}
    \left(n(R,f)+n(R,1/f)\right). \label{55}
    \end{eqnarray}

We make use of the proof of Lemma \ref{m-AW}, according to which there exist non-decreasing functions 
$u,v:[1,\infty)\to(0,\infty)$ satisfying the aforementioned properties (1)--(4). 
Choose $R=u(r)$ and $\alpha_1=\frac{\alpha_{\varphi,s}\varepsilon}{4(\rho_\varphi(f)+\varepsilon)}\in (0,1)$. 
Since $\varepsilon>0$ is arbitrary, it follows from \eqref{n-v-(a)} that
    \begin{equation}\label{n-u-1/4}
    n(u(r),f)	 \lesssim\frac{\varphi(s(r))^{\rho_\varphi(f)+\frac{\varepsilon}{4}}}{\log\frac{s(r)}{r}}.
    \end{equation}
By substituting  \eqref{52}--\eqref{n-u-1/4} into \eqref{(35)CF},  and  by using \eqref{unbounded-var}, we have
    \begin{equation}\label{(57)CF}
    \begin{split}
       \log^+\left|\frac{\mathcal{D}_q f(x)}{f(x)}\right|
       &\lesssim \frac{T(u(r),f))}{u(r)/r}+\frac{n(u(r),f)+n(u(r),1/f)}{u(r)/r}\\
       &\quad+ \varphi(r+3)^{\frac{\alpha_1}{{\alpha_{\varphi,s}}}(\rho_\varphi(f)+\varepsilon)}\cdot \frac{\varphi(s(r))^{\rho_\varphi(f)+\frac{\varepsilon}{4}}}{\log\frac{s(r)}{r}}+1\\
      & \lesssim \frac{\varphi(s(r))^{\rho_\varphi(f)+\frac{\varepsilon}{2}}}{\log\frac{s(r)}{r}}+1\lesssim  {\varphi(s(r))^{\rho_\varphi(f)-\alpha_{\varphi,s}\gamma_{\varphi,s}+\varepsilon}},\quad r\not\in E.
    \end{split}
     \end{equation} 
     
By \eqref{(57)CF}, it suffices to prove that the logarithmic measure of the exceptional set $E$ is finite.
We recall from \cite[p.~249]{BIY} that, for a meromorphic function $h(x)$,  
    \begin{equation*}\label{equal-log-order-N}
	n(r,h(cx))=n(|c|r,h(x)),\quad c\in\C\setminus\{0\}.
	\end{equation*} 
We apply this formula to the functions  $f(q^{-1/2}x)$ and $f(q^{1/2}x)$ and make use of \cite[Lemmas~4.1--4.2]{HWWY} to get
     $$
    \lambda_{\varphi}=\rho_\varphi(n(Ar,f)+n(Ar,1/f))\leq \frac{\rho_{\varphi}(f)}{\alpha_{\varphi,s}}<\infty,
     $$
where $\lambda_{\varphi} $ is  the $\varphi$-exponent of convergence of the sequence $\{d_n\}$ defined in \eqref{dn-def}, 
and $A=\max\{1,|q|^{-1/2},|q|^{1/2}\}$. For $N\geq R_0$ and a given sufficiently small $\delta>0$, we have 
$\frac{1}{\varphi(|d_N|)^{\frac{\rho_\varphi(f)+\varepsilon}{\alpha_{\varphi,s}}}}<\delta$. Using the fact that $\log(1 + |x|)\leq |x|$ for all $|x|\geq 0$, the constant $C_\delta=\frac{2}{1-\delta}>0$ satisfies
     \begin{equation*}\label{log-ine-C_delta}
     \log\frac{1+\frac{1}{\varphi(|d_N|)^{\frac{\rho_\varphi(f)+\varepsilon}{\alpha_{\varphi,s}}}}}{1-\frac{1}{\varphi(|d_N|)^{\frac{\rho_\varphi(f)+\varepsilon}{\alpha_{\varphi,s}}}}}
     \leq C_\delta \cdot\frac{1}{\varphi(|d_N|)^{\frac{\rho_\varphi(f)+\varepsilon}{\alpha_{\varphi,s}}}},\quad N\geq R_0.
     \end{equation*}
Therefore,
     \begin{equation*}
     \begin{split}
     \text{log-meas}\,  (E)
     &=\left(\int_{E\cap[1,|d_N|]}+\int_{E\cap[|d_N|,\infty)}\right)\,\frac{dt}{t}\\
     &\leq \log |d_N|+\sum_{n=N}^\infty\int_{E_n}\,\frac{dt}{t}
     = \log |d_N|+\sum_{n=N}^\infty\log\frac{1+\frac{1}{\varphi(|d_n|)^{\frac{\rho_\varphi(f)+\varepsilon}{\alpha_{\varphi,s}}}}}{1-\frac{1}{\varphi(|d_n|)^{\frac{\rho_\varphi(f)+\varepsilon}{\alpha_{\varphi,s}}}}}\\
     &\leq \log |d_N|+C_\delta\sum_{n=N}^\infty\frac{1}{\varphi(|d_n|)^{\lambda_{\varphi}+\frac{\varepsilon}{\alpha_{\varphi,s}}}}<\infty,
     \end{split}
     \end{equation*}
which yields the assertion.

(b)\, By making use of the proof of Lemma~\ref{m-AW}(b) and following the
same method as in Case (a) above, we obtain \eqref{52} and \eqref{55}. Choose $R=Br$ and $\alpha_1=\frac{\alpha_{\varphi,s}\varepsilon}{2(\rho_\varphi(f)+\varepsilon)}\in (0,1)$, where $B$ is defined in \eqref{B-defi}. Then by substituting \eqref{2br}, \eqref{52} and \eqref{55} into \eqref{(35)CF}, we have
        \begin{equation*}
    \begin{split}
       \log^+\left|\frac{\mathcal{D}_q f(x)}{f(x)}\right|
       &\lesssim   T(Br,f)+n(Br,f)+n(Br,1/f)\\
       &\quad+ \varphi(r+3)^{\frac{\alpha_1}{{\alpha_{\varphi,s}}}(\rho_\varphi(f)+\varepsilon)}\cdot \varphi(2Br)^{\rho_{\varphi}(f)+\frac{\varepsilon}{2}}+1\\
      &\leq {\varphi(2Br)^{\rho_\varphi(f)+\varepsilon}},\quad r\not\in E.
    \end{split}
     \end{equation*} 
Then the assertion follows from the subadditivity of $\varphi$, that is, $\varphi(2Br)\leq 2B\varphi(r)$.  Similarly as in Case (a) above, we deduce that the logarithmic measure of the exceptional set $E$ is finite.  This completes the proof.
\end{proof}


\section{Askey-Wilson type counting functions\\ and  characteristic functions}\label{N--}

In this section we state three lemmas, whose proofs are just minor modifications of the corresponding 
results in \cite{CF}. For a non-constant meromorphic function $f$, it follows from \cite[Lemmas~4.1--4.2]{HWWY} that $\rho_\varphi(f)\geq  \alpha_{\varphi,s}\lambda_\varphi+\alpha_{\varphi,s} \gamma_{\varphi,s}$ and, if $\alpha_{\varphi,s}>0$, then
    \begin{equation*}\label{n-5.1}
        n(r,a,f)=O(\varphi(r)^{\lambda_\varphi+\varepsilon})\leq O\left(\varphi(r)^{\frac{\rho_\varphi(f)}{\alpha_{\varphi,s}}-\gamma_{\varphi,s}+\varepsilon}\right),
        \end{equation*}
where $\lambda_\varphi$ is the $\varphi$-exponent of convergence of the $a$-points of $f$.  

Lemma \ref{Th5.1-CF} below is essential in  proving Lemma~\ref{N-AW}, and it reduces to \cite[Theorem~5.1]{CF} when choosing  $\varphi(r)=\log r$ and $s(r)=r^2$.
   
\begin{lemma}\label{Th5.1-CF}
Let $f$ be a non-constant meromorphic function of finite $\varphi$-order $\rho_{\varphi}(f)$. Suppose that $\varphi(r)$ is subadditive. Let $\alpha_{\varphi,s}>0$ and $\gamma_{\varphi,s}$ be the constants in \eqref{liminf}, and let  $\varepsilon>0$ and $a\in\widehat{\C}$. 
\begin{itemize}
\item[\textnormal{(a)}] If $\displaystyle\limsup_{r\to\infty}\frac{s(r)}{r}=\infty$ and if $s(r)$
is convex and differentiable, then
	\begin{equation*}
    N(r,a,f(\hat{x}))=N(r,a,f(x) )+O\left(\varphi(r)^{\frac{\rho_\varphi(f)}{\alpha_{\varphi,s}}-\gamma_{\varphi,s}+\varepsilon}\right)+O(\log r),
    \end{equation*}
    $$
    N(r,a,f(\check{x}))=N(r,a,f(x))+O\left(\varphi(r)^{\frac{\rho_\varphi(f)}{\alpha_{\varphi,s}}-\gamma_{\varphi,s}+\varepsilon}\right)+O(\log r).
    $$
\item[\textnormal{(b)}] If $\displaystyle\limsup_{r\to\infty}\frac{s(r)}{r}<\infty$, then
    \begin{equation*}
    N(r,a,f(\hat{x}))=N(r,a,f(x) )+O\left(\varphi(r)^{\rho_\varphi(f)+\varepsilon}\right)+O(\log r),
    \end{equation*}
    $$
    N(r,a,f(\check{x}))=N(r,a,f(x))+O\left(\varphi(r)^{\rho_\varphi(f)+\varepsilon}\right)+O(\log r).
    $$
\end{itemize}
\end{lemma}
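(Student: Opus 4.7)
The strategy is to follow the geometric reduction in the proof of \cite[Theorem~5.1]{CF} line by line and then to substitute the sharper $n$-bound coming from the $\varphi$-order framework of \cite[Lemmas~4.1--4.2]{HWWY}. The Chiang-Feng argument relates the $a$-points of the shifted function $x\mapsto f(\hat{x})$ to the $a$-points of $f$ itself via the conformal correspondence $z\mapsto \hat{x}=(q^{1/2}z+q^{-1/2}z^{-1})/2$, and the parallel statement for $\check{x}$ follows by the symmetry $q\leftrightarrow q^{-1}$ in the definition of the shift. Crucially, this geometric reduction makes no use at all of the logarithmic order assumption in \cite{CF}; only the final substitution of a growth estimate for $n(r,a,f)$ is sensitive to the ambient growth class.

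Concretely, I would first reproduce the CF derivation of an inequality of the form
\begin{equation*}
\bigl|N(r,a,f(\hat{x}))-N(r,a,f(x))\bigr|\leq C\cdot n(Ar,a,f)+O(\log r),
\end{equation*}
where $A=\max\{1,|q|^{1/2},|q|^{-1/2}\}$ and $C>0$ depends only on $q$. This reflects the observation that the map $x\mapsto \hat{x}$ transports $\{|x|\leq r\}$ into a set contained in $\{|x|\leq Ar\}$, so the symmetric difference of the relevant pole/zero sets lies in a region whose $a$-point count is bounded by $n(Ar,a,f)$, with the $O(\log r)$ term absorbing a Jensen-type contribution from the origin. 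Next, I would apply the growth bound stated in the text preceding the lemma, namely
\begin{equation*}
n(Ar,a,f)=O\bigl(\varphi(Ar)^{\rho_\varphi(f)/\alpha_{\varphi,s}-\gamma_{\varphi,s}+\varepsilon/2}\bigr)
\end{equation*}
in Case (a), and $n(Ar,a,f)=O(\varphi(Ar)^{\rho_\varphi(f)+\varepsilon/2})$ in Case (b). Since $\varphi$ is subadditive, $\varphi(Ar)\leq \lceil A\rceil\varphi(r)$ for $r\geq R_0$, and the constant factor $\lceil A\rceil^{\beta}$ (with $\beta$ the relevant exponent) is absorbed into $\varphi(r)^{\varepsilon/2}$ once $r$ is large enough.

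The main obstacle is verifying that the CF geometric step genuinely produces an estimate of the form $O(n(Ar,a,f))+O(\log r)$, rather than one involving $N(Ar,a,f)$ or an integral average of $n$; the latter would degrade the exponent by the usual loss from $n$ to $N$. Once this form is confirmed to survive the CF bookkeeping, the remainder is a routine insertion of the $\varphi$-order bound and a standard use of subadditivity. The $\check{x}$-statements in both cases then follow by interchanging $q^{1/2}\leftrightarrow q^{-1/2}$ throughout and noting that $A$ is symmetric under this exchange.
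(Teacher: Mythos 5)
Your proposal coincides with the paper's own (only sketched) proof: Section~\ref{N--} declares this lemma to be a ``minor modification'' of \cite[Theorem~5.1]{CF}, the modification being exactly the substitution you describe --- the growth-class-independent geometric reduction of \cite{CF} combined with the bound $n(r,a,f)=O\bigl(\varphi(r)^{\frac{\rho_\varphi(f)}{\alpha_{\varphi,s}}-\gamma_{\varphi,s}+\varepsilon}\bigr)$ stated immediately before the lemma for case (a), the $T(2r,f)\geq n(r,f)\log 2$ type bound for case (b), and subadditivity of $\varphi$ to absorb the dilation constant $A$. Your flagged concern (that CF's intermediate estimate must be of the form $O(n(Ar,a,f))+O(\log r)$ rather than $O(N(Ar,a,f))$) is indeed the right thing to check, and it does hold in \cite{CF}, since the discrepancy there is an integral of $n(t,a,f)/t$ over an interval of bounded logarithmic length.
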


Lemma~\ref{N-AW} below is a direct consequence of Lemma \ref{Th5.1-CF} and the definition of the AW-operator $\mathcal{D}_q f$, 
and it reduces to \cite[Theorem~3.3]{CF} when choosing  $\varphi(r)=\log r$ and $s(r)=r^2$.

\begin{lemma}\label{N-AW}
Let $f$ be a non-constant meromorphic function of finite $\varphi$-order $\rho_{\varphi}(f)$. Suppose that $\varphi(r)$ is subadditive. Let $\alpha_{\varphi,s}>0$ and $\gamma_{\varphi,s}$ be the constants in \eqref{liminf}, and let  $\varepsilon>0$.
\begin{itemize}
\item[\textnormal{(a)}] If $\displaystyle\limsup_{r\to\infty}\frac{s(r)}{r}=\infty$ and if $s(r)$
is convex and differentiable, then
	\begin{equation*}
	N\left(r,\mathcal{D}_q f\right)\leq 2N(r,f)+
	O\left(\varphi(r)^{\frac{\rho_\varphi(f)}{\alpha_{\varphi,s}}-\gamma_{\varphi,s}+\varepsilon}\right)+O(\log r).
    \end{equation*}
\item[\textnormal{(b)}] If $\displaystyle\limsup_{r\to\infty}\frac{s(r)}{r}<\infty$, then
   \begin{equation*}
	N\left(r,\mathcal{D}_q f\right)\leq 2N(r,f)+
	O\left(\varphi(r)^{\rho_\varphi(f)+\varepsilon}\right)+O(\log r).
    \end{equation*}
 \end{itemize}
\end{lemma}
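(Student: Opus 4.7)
The plan is to proceed exactly as in the proof of \cite[Theorem~3.3]{CF}, but replacing the logarithmic-order estimate there with the $\varphi$-order estimate supplied by Lemma~\ref{Th5.1-CF}. Starting from the representation
$$
(\mathcal{D}_q f)(x)=\frac{f(\hat{x})-f(\check{x})}{\hat{x}-\check{x}},
$$
every pole of $\mathcal{D}_q f$ must come from one of three sources: a pole of $f(\hat{x})$, a pole of $f(\check{x})$, or a zero of the denominator $\hat{x}-\check{x}=(q^{1/2}-q^{-1/2})(z-1/z)/2$. Since $0<|q|<1$ and the factor $z-1/z$ vanishes only at $z=\pm1$, i.e.\ at $x=\pm1$, the denominator contributes at most a bounded number of exceptional points, hence at most $O(\log r)$ to the integrated counting function.

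With this observation, the pointwise bound is the simple inequality
$$
n(r,\mathcal{D}_q f)\leq n(r,f(\hat{x}))+n(r,f(\check{x}))+O(1),
$$
and integration yields
$$
N(r,\mathcal{D}_q f)\leq N(r,f(\hat{x}))+N(r,f(\check{x}))+O(\log r).
$$

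To finish, I apply Lemma~\ref{Th5.1-CF} with $a=\infty$ to each of the two terms $N(r,f(\hat{x}))$ and $N(r,f(\check{x}))$. In case (a), where $\limsup_{r\to\infty}s(r)/r=\infty$ and $s(r)$ is convex and differentiable, each of those counting functions equals $N(r,f)+O\bigl(\varphi(r)^{\rho_\varphi(f)/\alpha_{\varphi,s}-\gamma_{\varphi,s}+\varepsilon}\bigr)+O(\log r)$, and summing gives the $2N(r,f)$ main term with an error of the same order. In case (b), where $\limsup_{r\to\infty}s(r)/r<\infty$, Lemma~\ref{Th5.1-CF}(b) similarly gives an error of order $\varphi(r)^{\rho_\varphi(f)+\varepsilon}$, leading directly to the claimed bound.

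Since the geometric part of the argument (counting the spurious poles coming from $\hat{x}-\check{x}$) is exactly the same as in \cite[Theorem~3.3]{CF} and the entire analytic content has been packaged into Lemma~\ref{Th5.1-CF}, there is no genuine obstacle here; the only point requiring some care is checking that the $O(\log r)$ absorption of the exceptional points $x=\pm1$ remains valid in the present setting, which is immediate from the growth restriction \eqref{general-restriction} ensuring $\log r\leq\varphi(r)$.
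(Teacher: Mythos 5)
Your proposal is correct and follows exactly the route the paper intends: the paper offers no detailed argument, stating only that Lemma~\ref{N-AW} is a direct consequence of Lemma~\ref{Th5.1-CF} and the definition of $\mathcal{D}_q f$ (mirroring \cite[Theorem~3.3]{CF}), which is precisely what you carry out by bounding the poles of $(f(\hat{x})-f(\check{x}))/(\hat{x}-\check{x})$ by those of $f(\hat{x})$ and $f(\check{x})$ plus the finitely many zeros of the denominator at $x=\pm 1$, and then invoking Lemma~\ref{Th5.1-CF} with $a=\infty$ in each case.
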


The following result reduces to \cite[Theorem~3.4]{CF} when choosing $\varphi(r)=\log r$ and $s(r)=r^2$.

\begin{lemma}\label{T-D_f}
Let $f$ be a non-constant meromorphic function of finite $\varphi$-order $\rho_{\varphi}(f)$. Suppose that $\varphi(r)$ is subadditive. Let $\alpha_{\varphi,s}>0$ and $\gamma_{\varphi,s}$ be the constants in \eqref{liminf}, and let  $\varepsilon\in (0,1)$.
\begin{itemize}
\item[\textnormal{(a)}] If $\displaystyle\limsup_{r\to\infty}\frac{s(r)}{r}=\infty$ and if $s(r)$
is convex and differentiable, then
	\begin{equation*}
	T\left(r,\mathcal{D}_q f\right)\leq 2T(r,f)+
	O\left(\varphi(r)^{\frac{\rho_\varphi(f)}{\alpha_{\varphi,s}}-\gamma_{\varphi,s}+\varepsilon}\right)+O(\log r).
    \end{equation*}
\item[\textnormal{(b)}] If $\displaystyle\limsup_{r\to\infty}\frac{s(r)}{r}<\infty$, then
   \begin{equation*}
	T\left(r,\mathcal{D}_q f\right)\leq 2T(r,f)+
	O\left(\varphi(r)^{\rho_\varphi(f)+\varepsilon}\right)+O(\log r).
    \end{equation*}
 \end{itemize}
\end{lemma}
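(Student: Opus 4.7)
The plan is to decompose $T(r,\mathcal{D}_q f)=m(r,\mathcal{D}_q f)+N(r,\mathcal{D}_q f)$ and bound each piece with the tools already assembled. For the proximity part I would use the routine splitting
$$
m(r,\mathcal{D}_q f)\leq m\!\left(r,\frac{\mathcal{D}_q f}{f}\right)+m(r,f),
$$
and then apply Lemma~\ref{m-AW} to the first summand. For the counting part I would simply invoke Lemma~\ref{N-AW}, which already bounds $N(r,\mathcal{D}_q f)$ by $2N(r,f)$ plus an error of the desired shape. Combining these, together with $m(r,f)+2N(r,f)\leq 2T(r,f)$, gives the target estimate up to rewriting the error terms.

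In Case (a), this procedure yields
$$
T(r,\mathcal{D}_q f)\leq 2T(r,f)+O\!\left(\varphi(s(r))^{\rho_\varphi(f)-\alpha_{\varphi,s}\gamma_{\varphi,s}+\varepsilon}\right)+O(\log r).
$$
The only nontrivial step is rewriting the $\varphi(s(r))$-exponent as a $\varphi(r)$-exponent. Directly from the definition of $\alpha_{\varphi,s}$ in \eqref{liminf}, for every $\delta>0$ and all $r\geq R_0$ we have $\log\varphi(s(r))\leq \log\varphi(r)/(\alpha_{\varphi,s}-\delta)$, and hence
$$
\varphi(s(r))^{\rho_\varphi(f)-\alpha_{\varphi,s}\gamma_{\varphi,s}+\varepsilon}\leq \varphi(r)^{\frac{\rho_\varphi(f)-\alpha_{\varphi,s}\gamma_{\varphi,s}+\varepsilon}{\alpha_{\varphi,s}-\delta}}.
$$
As $\delta\to 0^+$ the exponent tends to $\rho_\varphi(f)/\alpha_{\varphi,s}-\gamma_{\varphi,s}+\varepsilon/\alpha_{\varphi,s}$, so choosing $\delta$ small enough and replacing the original $\varepsilon$ by a suitably smaller one gives the form $\rho_\varphi(f)/\alpha_{\varphi,s}-\gamma_{\varphi,s}+\varepsilon$ required by the statement.

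Case (b) is even shorter: the same decomposition, fed with the bounded-ratio versions of Lemmas~\ref{m-AW} and~\ref{N-AW}, produces $m(r,\mathcal{D}_q f/f)=O(\varphi(r)^{\rho_\varphi(f)+\varepsilon})$ and $N(r,\mathcal{D}_q f)\leq 2N(r,f)+O(\varphi(r)^{\rho_\varphi(f)+\varepsilon})+O(\log r)$ directly, with no exponent-conversion step needed. Thus the proof reduces to a line or two once the preceding lemmas are cited.

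I do not expect any serious obstacle. The argument is essentially bookkeeping, and the one delicate point is the $\varphi(s(r))\mapsto \varphi(r)^{1/\alpha_{\varphi,s}}$ passage in Case (a), which is handled by the standard $\varepsilon$--$\delta$ absorption afforded by the arbitrariness of $\varepsilon>0$.
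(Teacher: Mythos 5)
Your proposal is correct and follows essentially the same route as the paper: the paper also (implicitly) splits $T(r,\mathcal{D}_q f)$ via $m(r,\mathcal{D}_q f)\leq m(r,\mathcal{D}_q f/f)+m(r,f)$, invokes Lemmas~\ref{m-AW} and~\ref{N-AW}, and converts the $\varphi(s(r))$-bound into a $\varphi(r)$-bound using $\varphi(s(r))\leq\varphi(r)^{1/(\alpha_{\varphi,s}-\varepsilon^*)}$ from the definition of $\alpha_{\varphi,s}$. The only cosmetic difference is that the paper fixes explicit values $\varepsilon^*$ and $\varepsilon'$ (see \eqref{var-s}--\eqref{enlarge-term}) where you argue by letting $\delta\to 0^+$ and shrinking $\varepsilon$, which is the same absorption device.
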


\begin{proof}
Choose $\varepsilon^*=\frac{\alpha_{\varphi,s}^2\varepsilon^2}{2(\rho_\varphi(f)+\alpha_{\varphi,s}\varepsilon)}\in \left(0,\frac{\alpha_{\varphi,s}}{2}\right)$.
By the definition of the constant $\alpha_{\varphi,s}$ in \eqref{liminf}, it follows that 
	\begin{equation}\label{var-s}
	\varphi(s(r))\leq \varphi(r)^{\frac{1}{\alpha_{\varphi,s}-\varepsilon^*}},\quad r\geq R_0.
	\end{equation}
We replace $\varepsilon$ in Lemma~\ref{m-AW}(a) with 
$\varepsilon'=\frac{\alpha_{\varphi,s}\varepsilon}{2}+\gamma_{\varphi,s}\varepsilon^*=\left(\frac{\alpha_{\varphi,s}}{2}+\frac{\alpha_{\varphi,s}^2\gamma_{\varphi,s}\varepsilon}{2(\rho_\varphi(f)+\alpha_{\varphi,s}\varepsilon)}\right)\varepsilon$, 
which we are allowed to do since $0<\frac{\alpha_{\varphi,s}}{2}\leq \frac{\alpha_{\varphi,s}}{2}+\frac{\alpha_{\varphi,s}^2\gamma_{\varphi,s}\varepsilon}{2(\rho_\varphi(f)+\alpha_{\varphi,s}\varepsilon)}<\alpha_{\varphi,s}\leq 1$.
Consequently, we deduce from \eqref{var-s} that
    \begin{equation}\label{enlarge-term}
    \begin{split}
    \varphi(s(r))^{\rho_\varphi(f)-\alpha_{\varphi,s}\gamma_{\varphi,s}+\varepsilon'}
    &\leq\varphi(r)^\frac{\rho_\varphi(f)-\alpha_{\varphi,s}\gamma_{\varphi,s}
    +\varepsilon'}{\alpha_{\varphi,s}-\varepsilon^*}\\
    &\leq \varphi(r)^{\frac{\rho_\varphi(f)}{\alpha_{\varphi,s}}-\gamma_{\varphi,s}+\varepsilon},\quad r\geq R_0.
    \end{split}
    \end{equation}
Case (a) now follows directly from \eqref{enlarge-term} and Lemmas \ref{m-AW}(a) and \ref{N-AW}(a). Case (b) is more straight forward.
\end{proof}

\begin{remark}\label{2.8-re}
If $\alpha_{\varphi,s}>0$, it is easy to see that 
   \begin{equation*}\label{rho_varphi}
   \rho_\varphi(\mathcal{D}_q f)\leq\max\left\{\rho_{\varphi}(f),\,\frac{\rho_{\varphi}(f)}{\alpha_{\varphi,s}}-\gamma_{\varphi,s}\right\}.
   \end{equation*}
\end{remark}


\section{Proofs  of theorems }\label{proofs}
\vskip 3mm
\noindent
{\bf{Proof of Theorem~\ref{AW-th12.4}.}}  
All assertions are true if $\rho_\varphi(f)=\infty$ or if $\alpha_{\varphi,s}=0$, so we may suppose that $\rho_\varphi(f)<\infty$ and $\alpha_{\varphi,s}>0$.  

(a) We begin by proving for every $k\in\N$ that
    \begin{equation}\label{rho_var,k}
    \begin{split}
    \rho_\varphi(\mathcal{D}_q^{k} f)&\leq {\max}\left\{\rho_\varphi(f),\, \max_{1\leq l\leq k}\left\{\frac{\rho_\varphi(f)}{\alpha_{\varphi,s}^{l}}-\gamma_{\varphi,s}\sum_{j=0}^{l-1}\frac{1}{\alpha_{\varphi,s}^j}\right\}\right\}=:\rho_{\varphi,k}.
    \end{split}
    \end{equation}
The case $k=1$ is obvious by  Remark~\ref{2.8-re}. We suppose that \eqref{rho_var,k} holds for $k$, and we aim to prove 
\eqref{rho_var,k} for $k+1$. Applying Remark~\ref{2.8-re} to the meromorphic function $\mathcal{D}_q^{k} f$ yields
       \begin{equation*}
      \begin{split}
      \rho_\varphi(\mathcal{D}_q^{k+1} f)&=\rho_\varphi(\mathcal{D}_q(\mathcal{D}_q^{k} f))
      \leq\max\left\{\rho_\varphi(\mathcal{D}_q^{k} f),\,
      \frac{\rho_\varphi(\mathcal{D}_q^{k} f)}{\alpha_{\varphi,s}}-\gamma_{\varphi,s}\right\}\\
      &\leq\max\left\{\rho_\varphi(f),\,\max_{1\leq l\leq k+1}\left\{\frac{\rho_\varphi(f)}{\alpha_{\varphi,s}^{l}}-\gamma_{\varphi,s}\sum_{j=0}^{l-1}\frac{1}{\alpha_{\varphi,s}^j}\right\}\right\}=\rho_{\varphi,k+1}.
      \end{split}
      \end{equation*}
The assertion  \eqref{rho_var,k} is now proved. Moreover, it is easy to see that 
$\rho_\varphi(f)\leq \rho_{\varphi,k}\leq \rho_{\varphi,k+1}$ for $k\in\N$.
       
Suppose first that the coefficients $a_0(x),\ldots,a_n(x)$ are entire.
We divide \eqref{AW-q-diff} by $f(x)$ and make use of \eqref{enlarge-term}, \eqref{rho_var,k} and Lemma \ref{m-AW}(a) to obtain
    \begin{equation}\label{m-a_0-Dq}
    \begin{split}
	m(r,a_0)&\leq \max_{1\leq j\leq n}\{m(r,a_j)\}+{\sum_{1\leq j\leq n}} m\left(r,\frac{\mathcal{D}_q^jf}{f}\right)\\
	&\lesssim \max_{1\leq j\leq n}\{m(r,a_j)\}+{\max_{1\leq j\leq n}}\left\{m\left(r,\frac{\mathcal{D}_q^{j} f}{\mathcal{D}_q^{j-1}  f}\right)\right\}\\
		&\lesssim  \varphi(r)^{\rho_\varphi(a_0)-\veps} 
	+ \varphi(r)^{\frac{\rho_{\varphi,n-1}}{\alpha_{\varphi,s}}-\gamma_{\varphi,s}+{\varepsilon}},\quad r\geq R_0.  
	\end{split}
    \end{equation}
 Since there exists a sequence $\{r_n\}$ of positive real numbers tending to infinity  such that $m(r_n,a_0)\geq \varphi(r_n)^{\rho_\varphi(a_0)-\frac{\varepsilon}{2}}$, we have
    $$
    \rho_\varphi(a_0)-\frac{\varepsilon}{2}\leq \frac{\rho_{\varphi,n-1}}{\alpha_{\varphi,s}}-\gamma_{\varphi,s}+{\varepsilon},
    $$
where we may let $\varepsilon\to 0^+$. This gives us  
    \begin{eqnarray*}
     \rho_\varphi(a_0) &\leq& \max\left\{\frac{\rho_\varphi(f)}{\alpha_{\varphi,s}}-\gamma_{\varphi,s},\, 
     \max_{1\leq l\leq n-1}\left\{\frac{\rho_\varphi(f)}{\alpha_{\varphi,s}^{l+1}}-\gamma_{\varphi,s}\sum_{j=0}^{l}\frac{1}{\alpha_{\varphi,s}^j}\right\}\right\}\\
     &=&\max_{1\leq l\leq n}\left\{\frac{\rho_\varphi(f)}{\alpha_{\varphi,s}^{l}}-\gamma_{\varphi,s}\sum_{j=0}^{l-1}\frac{1}{\alpha_{\varphi,s}^j}\right\},
    \end{eqnarray*}
and so
    \begin{eqnarray*}
     \alpha_{\varphi,s}^n \rho_\varphi(a_0)
     &\leq&\max_{1\leq l\leq n}\left\{\alpha_{\varphi,s}^{n-l}\rho_\varphi(f)
     -\gamma_{\varphi,s}\sum_{j=0}^{l-1}\alpha_{\varphi,s}^{n-j} \right\}
     \leq \rho_\varphi(f)-\alpha_{\varphi,s}^n \gamma_{\varphi,s}.
    \end{eqnarray*}
    Then the assertion \eqref{a-2-2.1} follows.

Suppose then that some of the coefficients $a_0(x),\ldots,a_n(x)$  have poles. We divide \eqref{AW-q-diff} by $f(x)$ and make use of   \eqref{rho_var,k} and Lemma \ref{T-D_f}(a) to obtain
   \begin{equation*}\label{N--a-0}
    \begin{split}
	N(r,a_0)&\lesssim \max_{1\leq j\leq n}\{T(r,a_j)\}+\sum_{j=0}^n
T(r,\mathcal{D}_q^j f) 	\\
    &\lesssim \max_{1\leq j\leq n}\{T(r,a_j)\}+T(r,f)+
	\varphi(r)^{\frac{\rho_{\varphi,n-1}}{\alpha_{\varphi,s}}-\gamma_{\varphi,s}+\varepsilon}+\log r\\
	&\lesssim \varphi(r)^{\rho_\varphi(a_0)-\veps}+
	\varphi(r)^{\rho_\varphi(f)+\varepsilon}+\varphi(r)^{\frac{\rho_{\varphi,n-1}}{\alpha_{\varphi,s}}-\gamma_{\varphi,s}+\varepsilon}+\log r,\quad r\geq R_0.
	\end{split}
    \end{equation*}
Combining this with \eqref{m-a_0-Dq} and noting the fact that $f$ is non-constant, we obtain
    \begin{equation*}\label{rho-(a_0)-N}
    \rho_\varphi(a_0)\leq \max\left\{\rho_\varphi(f), \, \max_{1\leq l\leq n}\left\{\frac{\rho_\varphi(f)}{\alpha_{\varphi,s}^{l}}-\gamma_{\varphi,s}\sum_{j=0}^{l-1}\frac{1}{\alpha_{\varphi,s}^j}\right\}\right\}=\rho_{\varphi,n},
    \end{equation*}
and thus, similarly as above,
    \begin{equation*}
    \begin{split}
    \alpha_{\varphi,s}^n \rho_\varphi(a_0)&\leq \max\left\{ \alpha_{\varphi,s}^n \rho_\varphi(f), \,  
    \rho_\varphi(f)-\alpha_{\varphi,s}^n \gamma_{\varphi,s}\right\}\leq \rho_\varphi(f).
    \end{split}
    \end{equation*}
Hence the assertion \eqref{a-1-2.1} follows.

(b)  Similarly as in Case (a) above, we make use of \eqref{rho_var,k}  and Lemmas \ref{m-AW}(b) and \ref{T-D_f}(b) to obtain
     \begin{equation*}
    \begin{split}
T(r,a_0)&=m(r,a_0)+N(r,a_0)\\& \lesssim \max_{1\leq j\leq n}\{T(r,a_j)\}+{\sum_{1\leq j\leq n}} m\left(r,\frac{\mathcal{D}_q^jf}{f}\right)+\sum_{j=0}^n
T(r,\mathcal{D}_q^j f) \\
	&\lesssim  \varphi(r)^{\rho_\varphi(a_0)-\veps} 
	+\varphi(r)^{\rho_{\varphi,n-1}+\varepsilon}+\log r,\quad r\geq R_0. 
	\end{split}
    \end{equation*}
This together with the fact that $f$ is non-constant, we deduce  $\rho_{\varphi}(a_0)\leq \rho_{\varphi,n-1}$, and so        
$\alpha_{\varphi,s}^{n-1} \rho_\varphi(a_0)\leq \rho_\varphi(f).$
This completes the proof. \hfill$\Box$

\vskip 3mm
\noindent
{\bf{Proof of Theorem~\ref{AW-th12.4-non}.}}  
Choose $s(r)$ satisfying the assumptions of Theorem~\ref{AW-th12.4}(b).   We divide \eqref{AW-q-diff-non} by $f(x)$ and make use of   \eqref{rho_var,k} and Lemmas \ref{m-AW}(b) and \ref{T-D_f}(b) to obtain
    \begin{equation*}
    \begin{split}
T(r,a_0) & \lesssim \max_{1\leq j\leq n+1}\{T(r,a_j)\}+{\sum_{1\leq j\leq n}} m\left(r,\frac{\mathcal{D}_q^jf}{f}\right)+m\left(r,\frac{1}{f}\right)+\sum_{j=0}^n
T(r,\mathcal{D}_q^j f)\\
		&\lesssim  \varphi(r)^{\rho_\varphi(a_0)-\veps} 
	+\varphi(r)^{\rho_{\varphi,n-1}+\varepsilon}+\log r,\quad r\geq R_0.
	\end{split}
    \end{equation*}
Similarly as in the proof of Theorem~\ref{AW-th12.4}(b), the assertion follows. \hfill$\Box$

\section*{Acknowledgements}

The first author would like to thank the support of the China Scholarship Council (No.~201806330120).
The third author was supported by National Natural Science Foundation of China (No.~11771090).
The fourth author was supported by the National Natural Science Foundation of China (No.~11971288 and No.~11771090) and Shantou University SRFT (NTF18029).

\end{document}